\newtheorem{theorem}{Theorem}[section]
\newtheorem{proposition}[theorem]{Proposition}
\newtheorem{lemma}[theorem]{Lemma}
\theoremstyle{definition}
\theoremstyle{remark}
\numberwithin{equation}{section}
\begin{document} 
\title 
[Central values of derivatives of Dirichlet $L$-functions] % Õoptional short form; for the running headÕ 
{Central values of derivatives of Dirichlet $L$-functions} % ÕMain titleÕ 
% ÕEach author has his or her own set of coordinates.Õ 
\author{H. M. Bui}
%    Address of record for the research reported here
\address{Mathematical Institute, University of Oxford, Oxford, OX1 3LB UK}
%    Current address
\email{hung.bui@maths.ox.ac.uk}
%    \thanks will become a 1st page footnote.
\thanks{The first author is supported by an EPSRC Postdoctoral Fellowship.}

%    Information for second author
\author{M. B. Milinovich}
\address{Department of Mathematics, University of Mississippi, University, MS 38677 USA}
\email{mbmilino@olemiss.edu}
%\thanks{Author was supported in part by S. M. Gonek's NSF grant.}

\subjclass[2000]{Primary 11M06, 11M26}
% ÕAbstract comes before maketitle, as in the AMS classesÕ 
\begin{abstract} 
Let $\mathscr{C}_{q}^{+}$ be the set of even, primitive Dirichlet characters (mod $q$).  Using the mollifier method we show that $L(\tfrac{1}{2},\chi)\neq 0$ for at least half of the characters $\chi\in\mathscr{C}_{q}^{+}$. Here, $L(s,\chi)$ is the Dirichlet $L$-function associated to the character $\chi$.  This result was previously known to hold for a third of the $\chi\in\mathscr{C}_{q}^{+}$.  In addition, we show that almost all the characters $\chi\in\mathscr{C}_{q}^{+}$ satisfy $L^{(k)}(\tfrac{1}{2},\chi)\neq 0$ when $k$ and $q$ are large.  \end{abstract} 
\maketitle 
% ÕMain text starts here.Õ

\section{Introduction \& Statement of the Main Result}

An important topic in number theory is the behavior of families of $L$-functions and their derivatives inside the critical strip. In particular, questions concerning the order of vanishing of $L$-functions at special points on the critical line have received a great deal of attention. In the case of Dirichlet $L$-functions, it is widely believed that $L(\tfrac{1}{2},\chi)\neq 0$ for all primitive characters $\chi$. For quadratic characters $\chi$, this appears to have been first conjectured by Chowla; he states this as problem 3 in chapter 8 of \cite{Ch}.  

Though a proof of the non-vanishing of Dirichlet $L$-functions at the central point $s=1/2$ has remained elusive, there has been considerable progress in showing that $L(\tfrac{1}{2},\chi)$ is very often non-zero within various families of characters $\chi$. In \cite{IS}, Iwaniec and Sarnak show that at least 1/3 of Dirichlet $L$-functions in the family of primitive characters, to a large modulus $q$, do not vanish at the central point. This improves upon earlier work of Balasubramanian and Murty \cite{BM}. Soundararajan \cite{S} has shown that at least 7/8 of the central values in the family of quadratic Dirichlet L-functions are non-zero. More recently, Baier and Young \cite{BY} consider the family of Dirichlet $L$-functions associated to cubic and sextic characters and show that infinitely many (though not a positive proportion) of these functions are not zero at the central point.

In \cite{MV}, Michel and VanderKam consider the behavior of the derivatives of completed Dirichlet $L$-functions, $\Lambda(s,\chi)$, at the central point. (See \textsection 3, below, for a definition.) In particular, they show that for $\varepsilon>0$ and $q$ sufficiently large depending on $\varepsilon$, the inequality 
\begin{equation}\label{PK}
{\sum_{\substack{\chi \ \!\!(\textrm{mod}\ \!q)\\ \Lambda^{(k)}({\scriptstyle{\frac{1}{2}}},\chi)\ne0}}{\!\!\!\!\!\!\!\!}}^{+} \ 1 \ \geq \ \Big( \ \!\!P_{k} - \varepsilon \ \!\! \Big) \cdot {\sum_{\chi \ \!\!(\textrm{mod}\ \!q)}{\!\!\!\!\!}}^{+} \ 1
\end{equation}
holds, where the proportion
$$ P_{k} = \frac{2}{3}-\frac{1}{36k^{2}}-\frac{c}{k^{4}}$$
for some absolute constant $c>0$. As $k$ tends to infinity, the proportion $P_{k}$ approaches two thirds. This is analogous to a result of Conrey \cite{C}, who shows that almost all of the zeros of the $k$-th derivative of the Riemann $\xi$-function are on the critical line, and to a result of Kowalski, Michel and VanderKam \cite{KMV} who show that almost half of the set $\big\{\Lambda^{(k)}(\tfrac{1}{2},f)\big\}$ is non-zero, where $f$ runs over the set of primitive Hecke eigenforms of weight 2 relative to $\Gamma_{0}(q)$. This last result is best possible because half of these forms are even and half are odd. However, unlike the results in \cite{C} and \cite{KMV}, the inequality in (\ref{PK}) is not best possible since it is expected that $P_{k}=1$ for every positive integer $k$.

In contrast to \cite{MV}, we study the behavior of the functions $L^{(k)}(s,\chi)$, the derivatives of Dirichlet $L$-functions, at $s=\tfrac{1}{2}$. When $k$ and $q$ are sufficiently large, we show that $L^{(k)}(\tfrac{1}{2},\chi)\neq 0$ for almost all of the even, primitive characters $\chi$. As is the case in \cite{C} and \cite{KMV}, our result is asymptotically best possible as $k$ tends to infinity.

\begin{theorem}\label{th2}
Let $k\in\mathbb{N}$. Then, for $\varepsilon > 0$ and $q$ sufficiently large \textup{(}depending on $\varepsilon$\textup{)}, we have
\begin{equation}\label{PstarK}
{\sum_{\substack{\chi \ \!\!(\!\!\!\!\!\!\mod\ \!\!q)\\ L^{(k)}({\scriptstyle{\frac{1}{2}}},\chi)\ne0}}{\!\!\!\!\!\!\!\!}}^{+} \ 1 \ \geq \ \Big( \ \!\!P_{k}^{*} - \varepsilon \ \!\! \Big) \cdot {\sum_{\chi \ \!\!(\!\!\!\!\!\!\mod\ \!\!q)}{\!\!\!\!\!\!}}^{+} \ 1,
\end{equation}
where the proportion
\begin{equation}\label{pro}
P_{k}^{\ast} = 1-\frac{1}{16k^{2}}-\frac{c}{k^{4}}
\end{equation}
for some absolute constant $c>0$. In particular, $P_1^*\geq.7544 , P_2^*\geq.9083,$ $P_3^*\geq.9642$, $P_4^*\geq.9853$, $P_5^*\geq.9935$,  and $P_{25}^*\geq.9999$.
\end{theorem}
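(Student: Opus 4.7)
The plan is to follow the mollifier method of Iwaniec--Sarnak \cite{IS} and Michel--VanderKam \cite{MV}, adapted to the uncompleted $k$-th derivative at $s = \tfrac12$. Introduce a Dirichlet polynomial mollifier of the form
\[
M(\chi) \ = \ \sum_{h \leq y} \frac{\mu(h)\chi(h)}{\sqrt{h}} \, P\!\left(\frac{\log(y/h)}{\log y}\right), \qquad y \ = \ q^{\theta},
\]
with a fixed $\theta \in (0,1/2)$ and a polynomial $P$ satisfying boundary conditions adapted to the differential order $k$. The M\"obius weights reflect the Dirichlet coefficients of $1/L(s,\chi)$, while $P$ is tuned to neutralise the $(\log n)^{k}$ factors produced by differentiating $L(s,\chi)$. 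Since $M(\chi) L^{(k)}(\tfrac12,\chi)$ vanishes whenever $L^{(k)}(\tfrac12,\chi) = 0$, the Cauchy--Schwarz inequality gives
\[
\Bigl| \sideset{}{^+}\sum_{\chi \pmod q} M(\chi) L^{(k)}(\tfrac12, \chi) \Bigr|^{2} \ \leq \ \Bigl( \sideset{}{^+}\sum_{\substack{\chi \pmod q \\ L^{(k)}(1/2,\chi) \neq 0}} 1 \Bigr) \Bigl( \sideset{}{^+}\sum_{\chi \pmod q} \bigl| M(\chi) L^{(k)}(\tfrac12, \chi) \bigr|^{2} \Bigr),
\]
so it suffices to establish asymptotics $S_{1} \sim c_{1}(P,k,\theta) \, |\mathscr{C}_{q}^{+}|$ for the mollified first moment and $S_{2} \sim c_{2}(P,k,\theta) \, |\mathscr{C}_{q}^{+}|$ for the mollified second moment, and then to maximise $c_{1}^{2}/c_{2}$ over admissible $P$.

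For the first moment I would expand $L^{(k)}(\tfrac12, \chi)$ using the approximate functional equation obtained by differentiating that of $L(s,\chi)$ $k$ times and specialising at $s = \tfrac12$. This writes $L^{(k)}(\tfrac12,\chi)$ as a smoothly truncated Dirichlet sum of length $\asymp \sqrt{q}$ with coefficients $\chi(n)(\log n)^{k}/\sqrt{n}$, together with its dual piece. Multiplying by $M(\chi)$, interchanging summations, and applying orthogonality on $\mathscr{C}_{q}^{+}$ forces a diagonal condition between $n$ and $h$; the resulting sum is evaluated by a Perron/contour-shift argument in which the main term comes from the pole of $\zeta(s)$, giving $c_{1}$ as an explicit integral involving $P$ and its derivatives. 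The second moment is the more delicate step. After opening the square one obtains, schematically,
\[
S_{2} \ = \ \sum_{h_{1}, h_{2} \leq y} \sum_{n_{1}, n_{2}} (\mathrm{weights}) \cdot (\log n_{1})^{k}(\log n_{2})^{k} \cdot \sideset{}{^+}\sum_{\chi} \chi(h_{1} n_{1}) \overline{\chi(h_{2} n_{2})},
\]
and orthogonality on even characters reduces the inner sum to the congruence $h_{1} n_{1} \equiv \pm h_{2} n_{2} \pmod{q}$. The diagonal contribution $h_{1} n_{1} = h_{2} n_{2}$ is evaluated by a multi-dimensional contour shift in the style of Conrey \cite{C}, producing a quadratic form in the coefficients of $P$; the off-diagonal terms, coming from nonzero shifts, are handled by Poisson summation in the $n_{i}$ variables, the restriction $\theta < 1/2$ being precisely what is needed for the resulting error to be of lower order.

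The final step is variational: given the explicit expressions for $c_{1}(P)$ and $c_{2}(P)$, maximise $c_{1}^{2}/c_{2}$ over polynomials $P$. Writing $P$ in a convenient basis converts this into a generalised eigenvalue problem whose largest eigenvalue admits an expansion in powers of $1/k$; the leading term is $1$, and extracting the next two corrections yields the claimed $P_{k}^{*} = 1 - 1/(16k^{2}) - c/k^{4}$. The reason the ratio can approach $1$, rather than the $2/3$ of \cite{MV}, is structural: $L^{(k)}$ involves only pure derivatives of $L(s,\chi)$, without the mixing induced by differentiating the gamma factor of $\Lambda(s,\chi)$, so the mollifier can essentially cancel the leading behaviour. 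The chief obstacle is the uniform treatment of the second moment as $k$ varies: the factors $(\log n)^{k}$ enlarge the effective support of the Dirichlet sums and interact with the Poisson-summation error terms in a way that must be tracked carefully to keep the asymptotics valid for each fixed $k$ as $q \to \infty$.
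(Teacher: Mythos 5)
Your overall strategy — mollify $L^{(k)}(\tfrac12,\chi)$, apply Cauchy--Schwarz, and optimize the mollifier polynomial — is exactly the paper's framework, and you correctly identify the restriction $\theta<\tfrac12$ and the variational final step (the $\sinh$-type optimizer, inherited from Michel--VanderKam and Conrey--Ghosh). However, the technical routes you propose for the two moments differ from the paper's in ways worth spelling out. For the first moment, you suggest differentiating the approximate functional equation of $L(s,\chi)$ $k$ times and evaluating the resulting sums with $(\log n)^k$ weights directly. The paper instead short-circuits this entirely: it takes the Michel--VanderKam asymptotic for $\sum^+ \Lambda^{(k)}(\tfrac12,\chi)M(\chi)$ as a black box, writes $L(s,\chi)=H_q(s)\Lambda(s,\chi)$ with $H_q(s)=\hat q^{-s}/\Gamma(s/2)$, and applies the Leibniz rule, so that every term contributes a clean $(-\log\hat q)^{\ell}$ factor and the binomial sum telescopes to $(-1)^k\log^k q$. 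This is considerably shorter than re-deriving an approximate functional equation for $L^{(k)}$. For the second moment, you propose opening the square directly with $(\log n_1)^k(\log n_2)^k$ coefficients and handling the diagonal by a multi-dimensional contour shift and the off-diagonal by Poisson summation. The paper avoids carrying $(\log n)^k$ through the analytic machinery at all: it first evaluates the \emph{shifted} unmollified-style second moment $J_{\alpha,\beta}(q)=\sum^+ L(\tfrac12+\alpha,\chi)L(\tfrac12+\beta,\bar\chi)|M(\chi)|^2$ with small complex shifts (via orthogonality, a shifted approximate functional equation whose gamma-quotient $H(s)$ is designed to kill the spurious poles, and a crude Iwaniec--Sarnak bound on the off-diagonal — no Poisson summation is needed under $\theta<\tfrac12$), and then recovers $S_2(k,q)$ by applying $\partial^k_\alpha\partial^k_\beta$ at $\alpha=\beta=0$ via Cauchy's integral formula, which also controls the error term uniformly. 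Only after that differentiation do Conrey's lemmas on sums with $P$ and $P'$ enter. This ``shifts-then-differentiate'' device is precisely what makes the uniformity in $k$ — which you correctly flag as the chief obstacle of your direct plan — painless, since the $k$-dependence is isolated in a single contour integral over $(\alpha,\beta)$-circles of radius $1/\log q$. Your proposal is not wrong, but as outlined it would require re-doing much heavier bookkeeping than the paper's streamlined argument.
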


Theorem \ref{th2} confirms a prediction of Conrey and Snaith which arises from the $L$-functions Ratios Conjectures (see \textsection 8.1 of \cite{CS}). Their heuristic is based upon studying the behavior of the mollified moments of the derivatives of the Riemann zeta-function in $t$-aspect which they conjecture should behave similarly to the mollified moments of the derivatives of Dirichlet $L$-functions at the central point in $q$-aspect. This is in agreement with the conjectures of Keating and Snaith \cite{KS3,KS4} that suggest that both of these families of $L$-functions, the Riemann zeta-function in $t$-aspect and Dirichlet $L$-functions in $q$-aspect, should have the same underlying ``unitary" symmetry and so their (mollified) moments should behave similarly. See \cite{CF} for a detailed discussion of these ideas. In particular, our Proposition \ref{prop2} is a $q$-analogue of a result of Conrey and Ghosh\footnote{See equation (7) of \cite{CG}.} who computed the mollified moments of the derivatives of the Riemann zeta-function on the critical line.  

We remark that Theorem \ref{th2} does not improve upon the main result of \cite{MV}. In fact, for $k\in\mathbb{N}$, the zeros of the functions $L^{(k)}(s,\chi)$ and $\Lambda^{(k)}(s,\chi)$ are expected to behave quite differently. To illustrate this point, let $\chi$ be a primitive character and assume that the Riemann Hypothesis (RH) holds for the function $L(s,\chi)$. Then all the non-trivial zeros of $L(s,\chi)$ and all the zeros of $\Lambda(s,\chi)$ lie on the critical line $\text{Re } \! s\!=\!\tfrac{1}{2}.$ In addition, $L(s,\chi)$ has an infinite number of trivial zeros on the negative real axis. Since both $L(s,\chi)$ and $\Lambda(s,\chi)$ are entire functions, this distinction has a profound effect on the distribution of the zeros of their derivatives. The reason for this is the following classical result from the theory of entire functions: {\it If $F(s)$ is an entire function, then the zeros of $F'(s)$ lie within the convex hull of the zeros of $F(s)$.} Under the RH for $L(s,\chi)$, this implies that all the zeros of $\Lambda^{(k)}(s,\chi)$ lie on the line $\text{Re } \! s\!=\!\tfrac{1}{2}$. In contrast, the zeros of $L^{(k)}(s,\chi)$ are forced to lie in the half-plane $\text{Re } \! s\!\leq\! \tfrac{1}{2}$ and it is very likely the case that none of these zeros lie on the critical line.\footnote{We can show that if $q$ is sufficiently large, then the only zeros of $L'(s,\chi)$ on the critical line are the multiple zeros of $L(s,\chi)$. However, it is believed that the zeros of $L(s,\chi)$ are simple.} In particular, it is reasonable to conjecture that $L^{(k)}(\tfrac{1}{2},\chi)\!\neq\! 0$ for all primitive characters $\chi$ and all $k\in\mathbb{N}$. However, if $\chi$ is an even, real-valued (i.e. quadratic), primitive character, then the functional equation for $L(s,\chi)$ states that $\Lambda(s,\chi)\!=\!\Lambda(1-s,\chi)$. It follows from this that $\Lambda^{(k)}(\tfrac{1}{2},\chi)\!=\!0$ whenever $k$ is odd. Thus, the analogous conjecture for $\Lambda^{(k)}(\tfrac{1}{2},\chi)$ fails for infinitely many characters $\chi$. 

\subsection{Notation \& Conventions}

We say a Dirichlet character $\chi$ (mod $q$) is even if $\chi(-1)=1$.  We let $\mathscr{C}_q$ denote the set of primitive characters (mod $q$) and let $\mathscr{C}_{q}^{+}$ denote the subset of characters in $\mathscr{C}_q$ which are even. We put $\varphi^{+}(q) = \frac{1}{2}\varphi^{*}(q)$ where
$$  \varphi^{*}(q) = \sum_{k|q} \varphi(k) \mu(\tfrac{q}{k}) = \big| \mathscr{C}_q\big| \ \!; $$
the proof of this appears in Lemma 4.1, below.  It is not difficult to show that $\big|\mathscr{C}_{q}^{+}\big|= \varphi^{+}(q)+O(1)$.  In addition, we write $\sum_{\chi \ \!\!(\textrm{mod}\ \!q)}^{+}$ to indicate that the summation is restricted to $\chi \in \mathscr{C}_{q}^{+}$ and we write $\sum_{a(\textrm{mod}\ q)}^{\textstyle{\star}}$ and $\sum_n^\star$ to indicate that the summation is restricted to the residues $a(\textrm{mod}\ q)$ which are coprime to $q$ and to $n$ which are relatively prime to $q$, respectively.

\section{The Mollified Moments of $L^{(k)}({\scriptstyle{\frac{1}{2}}},\chi)$}

As may be expected, we prove Theorem \ref{th2} by computing certain mollified first and second moments of $L^{(k)}(\tfrac{1}{2},\chi)$ over the characters $\chi \in \mathscr{C}_{q}^{+}$ and then we use Cauchy's inequality.   

To each character $\chi \in \mathscr{C}_{q}^{+}$ we associate the function 
\begin{equation} \label{mollifier}
M(\chi) = M(\chi, P, y) := \sum_{n\leq y} \frac{\mu(n)\chi(n)}{\sqrt{n}} P\Big(\frac{\log y/n}{\log y} \Big),
\end{equation}
where $P$ is an arbitrary polynomial satisfying the conditions $P(0)=0$ and $P(1)=1$. The purpose of the function $M(\chi)$ is to smooth out or ``mollify" the large values of $L^{(k)}(\tfrac{1}{2},\chi)$ as we average over the $\chi \in \mathscr{C}_{q}^{+}$.  If we let
\begin{equation}
S_1(k,q) = {\sum_{\chi(\text{mod }q)}\!\!\!\!}^+ \ L^{(k)}(\tfrac{1}{2},\chi) M(\chi)
\end{equation}
and 
\begin{equation}
S_2(k,q) = {\sum_{\chi(\text{mod }q)}\!\!\!\!}^+ \ \big|L^{(k)}(\tfrac{1}{2},\chi)\big|^2 \big|M(\chi)\big|^2,
\end{equation}
then Cauchy's inequality implies that
\begin{equation} \label{cauchyineq}
{\sum_{\substack{\chi(\text{mod }q) \\ L^{(k)}({\scriptstyle{\frac{1}{2}}},\chi)\neq 0}}\!\!\!\!\!\!\!}^+  \ 1 \ \geq \ \frac{\big|S_1(k,q)\big|^2}{S_2(k,q)}.
\end{equation}
Thus, we require a lower bound for $|S_1(k,q)\big|$ and an upper bound for $S_2(k,q)$. Such estimates are provided by the following propositions.

\begin{proposition} \label{prop1}
Let $k$ be a positive integer.  Then, for $y=q^\vartheta$ and $0 < \vartheta < 1$, we have
\begin{displaymath}
S_{1}(k,q)  = (-1)^k\varphi^+(q)P(1) \log^{k} q \  \Big(1+O\Big(\frac{1}{\log q}\Big)\Big),
\end{displaymath}
where the implied constant depends on $\vartheta$ and $k$.
\end{proposition}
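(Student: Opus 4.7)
The plan is to reduce the computation to a standard mollified first--moment evaluation. Opening the mollifier and interchanging summation gives
$$S_1(k,q)=\sum_{n\le y}\frac{\mu(n)}{\sqrt n}P\!\left(\frac{\log(y/n)}{\log y}\right)\sum_\chi^+\chi(n)L^{(k)}(\tfrac12,\chi),$$
so the task is to evaluate the twisted first moment of $L^{(k)}(\tfrac12,\chi)$.

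I would represent $L^{(k)}(\tfrac12,\chi)$ via an approximate functional equation, obtained by contour--shifting a Mellin--Barnes integral for $\Lambda(s,\chi)$ and invoking $\Lambda(s,\chi)=\varepsilon(\chi)\Lambda(1-s,\bar\chi)$ for even primitive $\chi$. Introducing a shift $\alpha$, this writes $L(\tfrac12+\alpha,\chi)$ as a main sum $\sum_m\chi(m)W_\alpha(m)/m^{1/2+\alpha}$ plus a dual sum of shape $\varepsilon(\chi)(q/\pi)^{-\alpha}H(\alpha)\sum_m\bar\chi(m)W_{-\alpha}(m)/m^{1/2-\alpha}$, where $H$ is a gamma--quotient with $H(0)=1$ and $W_\alpha$ is a smooth cutoff of effective length $\sqrt{q/\pi}$. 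Differentiating $k$ times in $\alpha$ at $\alpha=0$, the crucial observation is that only the explicit prefactor $(q/\pi)^{-\alpha}$ in the dual sum produces a genuine power of $\log q$; its $k$--th derivative contributes the leading factor $(-\log(q/\pi))^k$, while all other factors (gamma quotients, the integral representation of $W_\alpha$) contribute only $q$--independent polygamma constants.

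Substituting into $S_1(k,q)$ and applying the even--primitive orthogonality
$$\sum_\chi^+\chi(nm)=\tfrac12\varphi^*(q)\qquad\text{when }nm\equiv\pm 1\pmod q,$$
up to corrections from imprimitive characters, the main--sum contribution survives only through the true diagonal $nm=1$, i.e.\ $n=m=1$; the shifted diagonals $nm\equiv\pm 1\pmod q$ with $nm>1$ can be controlled by divisor--type bounds for any $\vartheta<1$. Combining the top--weight factor $(-\log(q/\pi))^k$ from the dual sum with the central--value identity $\varepsilon(\chi)L(\tfrac12,\bar\chi)=L(\tfrac12,\chi)$, the leading dual contribution reassembles as $(-\log(q/\pi))^k\sum_\chi^+ M(\chi)L(\tfrac12,\chi)$, which is the standard mollified first moment $\sim\varphi^+(q)P(1)$. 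This yields the main term $(-1)^k\varphi^+(q)P(1)(\log q)^k$ claimed. Subleading terms from the full Leibniz expansion of the gamma/power factors generate corrections of size $(\log q)^{k-1}$, absorbed into the $O(1/\log q)$ error.

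The main obstacle is bounding the twisted off--diagonal contributions, primarily the Gauss--sum--weighted average $\sum_\chi^+\varepsilon(\chi)\chi(n)\bar\chi(m)$ arising from $m>1$ in the dual sum. After the manipulation $\bar\chi(m)\tau(\chi)=\sum_c\chi(c)\,e^{2\pi imc/q}$ and orthogonality, this collapses to $\tfrac{2\varphi^+(q)}{\sqrt q}\cos(2\pi m\bar n/q)$ (plus imprimitivity corrections), and summing trivially over $n\le y=q^\vartheta$ together with the effective range $m\le\sqrt q\cdot q^\epsilon$ yields a total of size $O\bigl(q^{(1+\vartheta)/2+\epsilon}(\log q)^k\bigr)$, safely negligible against $\varphi^+(q)(\log q)^k\asymp q(\log q)^k$ precisely when $\vartheta<1$. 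This is where the hypothesis $\vartheta<1$ enters; careful bookkeeping of the polygamma--valued subleading corrections from the Leibniz expansion then delivers the claimed $(1+O(1/\log q))$ precision.
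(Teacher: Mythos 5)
Your approach differs genuinely from the paper's. The paper does not recompute the twisted first moment at all; it takes the Michel--VanderKam asymptotic for $\sum^{+}_{\chi}\Lambda^{(k)}(\tfrac12,\chi)M(\chi)$ as a black box, writes $L(s,\chi)=H_q(s)\Lambda(s,\chi)$ with $H_q(s)=\hat q^{-s}/\Gamma(s/2)$, and transfers the result with a single application of Leibniz's rule, using that $H_q^{(\ell)}(\tfrac12)$ has leading term $(-\log\hat q)^{\ell}\hat q^{-1/2}/\Gamma(\tfrac14)$. All of the hard analysis (approximate functional equation, Gauss sums, off-diagonal bounds) is already absorbed in the cited result. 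Your plan instead redoes that analysis from scratch, which is a legitimate alternative route in principle.

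However, the central step of your argument contains a genuine gap. You claim that the leading dual contribution ``reassembles as $(-\log(q/\pi))^k\sum^{+}_\chi M(\chi)L(\tfrac12,\chi)$'', invoking $\varepsilon_\chi L(\tfrac12,\bar\chi)=L(\tfrac12,\chi)$. But the dual piece of the approximate functional equation at $\alpha=0$ is $\varepsilon_\chi\sum_m\bar\chi(m)m^{-1/2}V(m)$, which is \emph{not} $\varepsilon_\chi L(\tfrac12,\bar\chi)$ --- it is only one of the two balanced halves. The identity you cite does not convert this single half into $L(\tfrac12,\chi)$. After opening $\varepsilon_\chi\bar\chi(m)$ via the Gauss sum and applying orthogonality, what remains is
$$\frac{\varphi^{+}(q)}{\sqrt q}\sum_{n\le y}\frac{\mu(n)P(\cdot)}{\sqrt n}\sum_m\frac{V(m)}{\sqrt m}\bigl(e(m\bar n/q)+e(-m\bar n/q)\bigr)$$
plus imprimitivity corrections. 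This double sum is \emph{not} the mollified first moment of $L(\tfrac12,\chi)$: there is no natural diagonal in it (every pair $(m,n)$ contributes), and evaluating it requires a careful analysis of the exponential sum over $m$ against the inverses $\bar n\ (\mathrm{mod}\ q)$, which your sketch does not carry out. You treat it alternately as the source of the main term and as an error term, and in the latter role you assert a bound $O(q^{(1+\vartheta)/2+\epsilon}(\log q)^k)$ that the trivial estimate does not give (trivially one gets $q\cdot q^{-1/2}\cdot\hat q^{1/2}\cdot y^{1/2}\asymp q^{3/4+\vartheta/2}$ times powers of $\log q$, which is not negligible as $\vartheta\to1$). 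Separately, your identification of the ``main-sum'' contribution as coming only from $n=m=1$ (where it vanishes since $\log 1=0$) together with your assessment that all $\alpha$-derivatives of the cutoff contribute only $q$-independent constants leaves no place for the main term $(-1)^k\varphi^{+}(q)P(1)\log^kq$ to arise --- so either the dual block you are bounding away must carry it, or the main-sum off-diagonal must, and neither is justified in your sketch. To make this route rigorous you would need to actually extract the main term from the Gauss-sum-opened dual block rather than dismiss it, which is precisely the content the paper avoids by quoting Michel--VanderKam.
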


\begin{proposition} \label{prop2}
Let $k$ be a positive integer and $\varepsilon>0$ be arbitrary. Then, for $y=q^\vartheta$ and $0 < \vartheta < \tfrac{1}{2}$, we have
\begin{eqnarray*}
S_{2}(k,q) =\mathcal{C}_{k}(\vartheta) \ \!\! \varphi^+(q) \log^{2k}q \ \!\Big(1+O\Big(\frac{1}{(\log q)^{1-\varepsilon}}\Big)\Big),
\end{eqnarray*}
where 
\begin{displaymath}
\mathcal{C}_{k}(\vartheta)= \frac{\vartheta^{-1}}{2k\!+\!1}\int_{0}^{1}\!P'(x)^2 \ dx+\frac{1}{2}+\frac{\vartheta k^2}{2k\!-\!1}\int_{0}^{1}\!P(x)^2 \ dx,
\end{displaymath}
and the implied constant depends on $\vartheta$, $\varepsilon$, and $k$
\end{proposition}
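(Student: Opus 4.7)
The plan is to compute $S_2(k,q)$ by combining an approximate functional equation for $L^{(k)}(\tfrac12,\chi)$ with orthogonality of even primitive characters, then to evaluate the resulting diagonal contribution and absorb the off-diagonal remainder into the error term. I would begin by deriving an approximate functional equation of the shape
$$L^{(k)}(\tfrac12,\chi)=(-1)^k\!\sum_{m}\frac{\chi(m)\log^k m}{\sqrt m}\,V_k\!\Big(\frac m{\sqrt q}\Big)\;+\;\varepsilon(\chi)\!\sum_{m}\frac{\bar\chi(m)}{\sqrt m}\,\tilde V_k\!\Big(\frac m{\sqrt q}\Big),$$
where $V_k$ decays rapidly past $m\gg\sqrt q$ and $\tilde V_k(x)$ is a polynomial in $\log x$ of degree $k$, with leading coefficient $1$ and lower-order coefficients coming from derivatives of the gamma quotient on the dual side of $\Lambda(s,\chi)=\varepsilon(\chi)\Lambda(1-s,\bar\chi)$. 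The standard route is to write $L^{(k)}(\tfrac12,\chi)=\frac{k!}{2\pi i}\int_{(2)}L(\tfrac12+w,\chi)\,G(w)\,w^{-k-1}\,dw$ for a suitable test function $G$ and shift the contour to $\mathrm{Re}\,w=-2$ using the functional equation.

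Next I would multiply out $|L^{(k)}(\tfrac12,\chi)|^2\,|M(\chi)|^2$ and interchange summation, reducing the problem to evaluating ${\sum_{\chi(\mathrm{mod}\,q)}}^{+}\chi(am)\bar\chi(bn)$ for $a,b\leq y$ and $m,n$ effectively of size $\sqrt q$. Writing this as $\tfrac12{\sum^{\star}_\chi}\bigl(\chi(am)\bar\chi(bn)+\chi(am)\bar\chi(-bn)\bigr)$ and applying standard orthogonality for primitive characters mod $q$, the inner sum localises on the congruence $am\equiv\pm bn\pmod q$. Because $y=q^\vartheta$ with $\vartheta<\tfrac12$, the products $am$ and $bn$ on the main range satisfy $am,bn\ll q^{1/2+\vartheta}<q$, so the congruence forces $am=\pm bn$ on the main term, cleanly separating diagonal from off-diagonal.

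The diagonal then splits into three blocks according to whether $V_k$ or $\tilde V_k$ is used on each side of the AFE (the two mixed blocks combining by complex conjugation). Each block is evaluated by Mellin inversion: the M\"obius sums over $a,b$ become factors of $1/\zeta_q$, one extracts the double residue at the simple pole of $\zeta$ inside the resulting multiple contour integral, and the outstanding integrals over $P$ are rewritten via the changes of variable $a=y^u$, $b=y^v$, $m=q^{x/2}$. I expect the $(V_k,V_k)$ block to produce $\vartheta^{-1}(2k\!+\!1)^{-1}\!\int_0^1\!P'(x)^2\,dx$, arising from the $\log^k$ weights interacting with $P'$ as in the Iwaniec--Sarnak mollifier calculation; the two cross blocks together to produce the constant $\tfrac12$, reflecting the symmetry between $L$- and dual-side sums at the central point; and the dual--dual block $(\tilde V_k,\tilde V_k)$ to produce $\vartheta k^2(2k\!-\!1)^{-1}\!\int_0^1\!P(x)^2\,dx$, where the $k^2$ arises from squaring the leading-order coefficient of $\tilde V_k$. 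Adding these three assembles into $\mathcal{C}_k(\vartheta)\,\varphi^+(q)\log^{2k}q$.

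The off-diagonal contribution, where $am\neq\pm bn$ but $am\equiv\pm bn\pmod q$, is controlled either by the large sieve for primitive characters or by Weil-type bounds for incomplete character sums after Poisson summation in $m,n$; the constraint $\vartheta<\tfrac12$ forces its total contribution to be $O\bigl(\varphi^+(q)\log^{2k-1+\varepsilon}q\bigr)$, which is absorbed into the stated error. The main obstacle, as I see it, will be tracking the polynomial-in-log structure of $\tilde V_k$ carefully enough that the three diagonal blocks produce exactly the three terms of $\mathcal{C}_k(\vartheta)$ uniformly in $k$; in particular, isolating the constant $\tfrac12$ from the cross-block cancellation and correctly matching the leading Taylor coefficient of $\tilde V_k$ so that the $k^2$ factor in the $\int P^2$ piece emerges with the right sign and weight.
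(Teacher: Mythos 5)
Your proposal takes a genuinely different route from the paper. The paper never writes an approximate functional equation for $L^{(k)}(\tfrac12,\chi)$ at all: it introduces the shifted second moment $J_{\alpha,\beta}(q)=\sum_\chi^+ L(\tfrac12+\alpha,\chi)L(\tfrac12+\beta,\bar\chi)|M(\chi)|^2$, proves a \emph{joint} approximate functional equation for the product $L(\tfrac12+\alpha,\chi_1)L(\tfrac12+\beta,\chi_2)$ (Lemma~4.2), evaluates $J_{\alpha,\beta}(q)$ asymptotically via Lemmas~4.3--4.6, and finally recovers $S_2(k,q)=\frac{d^{2k}}{d\alpha^k d\beta^k}J_{\alpha,\beta}(q)\big|_{\alpha=\beta=0}$ by Cauchy's integral formula. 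Crucially, in the joint AFE the epsilon factors combine as $\varepsilon_\chi\varepsilon_{\bar\chi}=|\varepsilon_\chi|^2=1$, so both the $W^+$ and $W^-$ pieces are free of Gauss sums and both localize on the diagonal after orthogonality.

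This is precisely where your plan has a gap. Writing separate AFEs for $L^{(k)}(\tfrac12,\chi)$ and $\overline{L^{(k)}(\tfrac12,\chi)}=L^{(k)}(\tfrac12,\bar\chi)$ and multiplying them produces, in the two cross blocks, an unpaired root number $\varepsilon(\bar\chi)$ or $\varepsilon(\chi)$ multiplying $\chi(mna)\bar\chi(b)$. Summing over $\chi\in\mathscr C_q^+$ then yields Gauss-sum-twisted character sums (after expanding $\tau(\bar\chi)$ and applying orthogonality one gets Kloosterman-type sums), not a clean diagonal equation $am=\pm bn$. Your sketch treats these cross blocks as if they diagonalize in the same way as the other two and attributes the constant $\tfrac12$ to them; this is unsupported, and almost certainly not where $\tfrac12$ comes from. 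In the paper the $\tfrac12$ arises inside the \emph{diagonal} calculation, from the mixed $M_0(cd)M_1(cd)$ term $T_2$, via the boundary integral $\int_0^1 P'(x)P(x)\,dx=\tfrac12\big(P(1)^2-P(0)^2\big)=\tfrac12$; the $1/(2k+1)$, $\tfrac12$, and $\vartheta k^2/(2k-1)$ all emerge from the Taylor coefficients $b_{2k+1,0,0},\,b_{2k,1,0},\,b_{2k-1,1,1}$ of the differentiated shift function $Z_{q,\alpha,\beta}(m,n,c)$ together with Conrey's Lemmas~4.5--4.6. So the three-term structure of $\mathcal C_k(\vartheta)$ is a feature of how the $\alpha,\beta$-derivatives distribute over the single holomorphic function $Z$, not a block-by-block AFE decomposition, and your proposed attribution of each term to a specific AFE block (with the $k^2$ coming from ``squaring the leading coefficient of $\tilde V_k$'') does not align with the actual source of these coefficients. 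Separately, the off-diagonal bound you quote, $O(\varphi^+(q)\log^{2k-1+\varepsilon}q)$, is weaker than what is available and used: Lemma~4.4 gives the off-diagonal contribution as $O(yq^{1/2+\varepsilon})$, a power saving when $\vartheta<\tfrac12$, and this power saving is also what allows the Cauchy-integral step to survive the $k$-fold differentiation with a manageable error. To repair the proposal you would need either to switch to the joint AFE for the product (essentially recovering the paper's Lemma~4.2) or to genuinely estimate the Gauss-sum-twisted cross blocks and then correctly re-derive all three terms of $\mathcal C_k(\vartheta)$ from the remaining two blocks.
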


It is clear from (\ref{cauchyineq}) and the propositions that in order to prove Theorem \ref{th2} we need to choose the polynomial $P$, for each $k\geq 1$, which minimizes the constant $\mathcal{C}_{k}(\vartheta) $. This is done in \textsection 6. It turns out that except for a term which is exponentially small (as a function of $k$), the optimal choice of $P$ is independent of the choice of $\vartheta.$ This is not surprising, since similar phenomena have been observed when mollifying high derivatives of the Riemann zeta-function and the Riemann $\xi$-function on the critical line, and also when mollifying high derivatives of families of $L$-functions at the central point (see \cite{C,CG,KMV,MV}).

\section{Proof of Proposition \ref{prop1}}

In this section we establish Proposition \ref{prop1}. For $\chi\in\mathscr{C}_{q}^{+}$, the Dirichlet $L$-function, $L(s,\chi)$, associated to $\chi$ satisfies the functional equation
\begin{eqnarray}\label{fe}
\Lambda(s,\chi):=\bigg(\frac{q}{\pi}\bigg)^{s/2}\Gamma\bigg(\frac{s}{2}\bigg)L(s,\chi) =\varepsilon_\chi\Lambda(1-s,\overline\chi),
\end{eqnarray}
where $\varepsilon_\chi=\tau(\chi) q^{-1/2}$ and $\tau(\chi)$ is the Gauss sum
$$ \tau(\chi) = \sum_{a(\text{mod}\ \! q)} \chi(a) e\big(\tfrac{a}{q}\big) \ ; \quad e(x)=e^{2\pi i x}.$$
Note that $|\varepsilon_\chi|=1$ and, since $\chi$ is even, $\overline{\tau(\chi)}=\tau(\bar{\chi})$.

The result we require is implicit in \cite{MV} (see \textsection 3, page 135) where it is shown that\footnote{It follows from the functional equation for $\Lambda(s,\chi)$ that the quantity $\mathscr{L}(P_{k})$ in \textsection 3 of \cite{MV} is equal to $$ 2{\sum_{\chi \ \!\!(\textrm{mod}\ \!q)}}^{\!\!\!\!\!\!\! +} \ \Lambda^{(k)}(\tfrac{1}{2},\chi)M(\chi). $$}
\begin{equation}\label{mvmv}
{\sum_{\chi(\textrm{mod}\ \!q)}{\!\!\!\!\!\!}}^{+} \ \Lambda^{(k)}(\tfrac{1}{2},\chi)M(\chi) = \varphi^{+}(q) \ \! P(1) \ \!  \Gamma(\tfrac{1}{4}) \ \!  \hat{q}^{1/2} \log^{k}\hat{q} \ \Big(1+O\Big(\frac{1}{\log\hat{q}}\Big)\Big)
\end{equation}
for $k\in\mathbb{N}$ and $0 < \vartheta < 1$. Here $\hat{q}=\sqrt{q/\pi}$ and the implied constant depends on $\vartheta$. From (\ref{fe}), we see that 
\begin{equation} \label{fe2}
L(s,\chi) = H_{q}(s) \Lambda(s,\chi), \quad \text{ where} \ H_{q}(s) = \frac{ \hat{q}^{-s}}{\Gamma(\tfrac{s}{2})}.
\end{equation}
A straight-foward calculation shows that
\begin{equation} \label{fe3}
H_{q}^{(k)}(\tfrac{1}{2}) = (-1)^{k} \frac{\hat{q}^{-1/2}}{\Gamma(\tfrac{1}{4})} \log^{k}\hat{q}  \  \Big(1+O_{k}\Big(\frac{1}{\log\hat{q}}\Big)\Big)
\end{equation}
for each $k\in\mathbb{N}$. Now, combining (\ref{mvmv}), (\ref{fe2}), (\ref{fe3}) and using the Leibniz formula for differentiation, it follows that
\begin{equation*}
\begin{split}
{\sum_{\chi(\textrm{mod}\ \!q)}{\!\!\!\!\!\!}}^{+} \ L^{(k)}(\tfrac{1}{2},\chi)M(\chi) &=  \sum_{\ell=0}^{k}\binom{k}{\ell} {\sum_{\chi(\textrm{mod}\ \!q)}{\!\!\!\!\!\!}}^{+} \ H_{q}^{(\ell)}(\tfrac{1}{2}) \ \! \Lambda^{(k-\ell)}(\tfrac{1}{2},\chi) \ \! M(\chi)
\\
&= (-1)^{k} \sum_{\ell=0}^{k}\binom{k}{\ell} \varphi^{+}(q) \ \! P(1) \ \! \log^{k} \hat{q} \  \Big(1+O\Big(\frac{1}{\log\hat{q}}\Big)\Big)^{2}
\\
&= (-1)^{k} \ \! 2^{k} \ \! \varphi^{+}(q) \ \! P(1) \ \! \log^{k} \hat{q} \  \Big(1+O\Big(\frac{1}{\log\hat{q}}\Big)\Big),
\end{split}
\end{equation*}
where the implied constant depends on $\vartheta$ and $k$. Since $2 \log\hat{q} = \log q + O(1),$  we can conclude that
\begin{displaymath}
{\sum_{\chi(\textrm{mod}\ \!\!q)}{\!\!\!\!\!}}^{+} \ L^{(k)}(\tfrac{1}{2},\chi)M(\chi)= (-1)^{k}  \ \! \varphi^{+}(q) \ \! P(1)  \ \! \log^{k} q \  \Big(1+O\Big(\frac{1}{\log q}\Big)\Big).
\end{displaymath}
This establishes Proposition \ref{prop1}.

\section{Some Preliminary Results}

In this section we collect some preliminary results which we will use to establish Proposition \ref{prop2}. In what follows, $q$ is a large positive integer and $\alpha,\beta\in\mathbb{C}$ are taken to be small shifts satisfying $|\alpha|,|\beta| \leq 2(\log q)^{-1}$.  

Our first lemma concerns the orthogonality of primitive characters.

\begin{lemma}
For $(mn,q)=1$ we have
$$
{\sum_{\chi(\emph{mod}\ q)}{\!\!\!\!\!\!}}^{+}\ \chi(m)\overline{\chi}(n)=\frac{1}{2}\sum_{\substack{q=dr\\r|m\pm n}}\mu(d)\varphi(r),
$$
where the sums for the different signs $\pm$ are to be taken separately.
\end{lemma}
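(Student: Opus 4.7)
My plan is to derive this identity in two stages: first obtain the analogous orthogonality relation for \emph{all} primitive characters (mod $q$), and then project onto the even ones using the standard indicator $\tfrac{1}{2}(1+\chi(-1))$.

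For the first stage, I would start from the elementary orthogonality
$$
\sum_{\chi (\text{mod } q)} \chi(m)\overline{\chi}(n) = \varphi(q)\,\mathbf{1}_{m\equiv n\,(\text{mod }q)},
$$
valid when $(mn,q)=1$. Every Dirichlet character mod $q$ is induced from a unique primitive character of conductor $r\mid q$, so writing $q=dr$ and using that characters to induced modulus restrict identically on arguments coprime to $q$, I would rewrite the left-hand side as $\sum_{r\mid q}\sum_{\chi^{*}\text{ prim (mod }r)} \chi^{*}(m)\overline{\chi^{*}}(n)$. Meanwhile the right-hand side equals $\sum_{r\mid q} \varphi(r)\,\mathbf{1}_{r\mid (m-n)}$ because the condition $m\equiv n\pmod{r}$ for $r\mid q$ detects exactly the divisibility $r\mid m-n$ (since $(mn,q)=1$ forces the relevant congruences to behave multiplicatively in $r$). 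Applying Möbius inversion over divisors of $q$ yields
$$
{\sum_{\chi(\text{mod }q)}{\!\!\!\!\!\!}}^{*\text{prim}}\ \chi(m)\overline{\chi}(n) = \sum_{\substack{q=dr\\ r\mid m-n}} \mu(d)\,\varphi(r),
$$
which is the standard primitive orthogonality I need.

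For the second stage, since $\chi\in\mathscr{C}_{q}^{+}$ means $\chi(-1)=1$, I write
$$
{\sum_{\chi(\text{mod }q)}{\!\!\!\!\!\!}}^{+}\ \chi(m)\overline{\chi}(n)
=\frac{1}{2}{\sum_{\chi(\text{mod }q)}{\!\!\!\!\!\!}}^{*\text{prim}}\bigl(1+\chi(-1)\bigr)\chi(m)\overline{\chi}(n).
$$
The $1$-term gives the primitive orthogonality above (with the $m-n$ sign). For the $\chi(-1)$-term I use complete multiplicativity: $\chi(-1)\chi(m)=\chi(-m)$, and $(-m)n$ is still coprime to $q$. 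Applying the same primitive orthogonality with $m$ replaced by $-m$ produces a sum over $r\mid -m-n$, i.e.\ $r\mid m+n$. Adding the two contributions and pulling the $\tfrac{1}{2}$ outside gives exactly
$$
\frac{1}{2}\sum_{\substack{q=dr\\ r\mid m\pm n}} \mu(d)\,\varphi(r),
$$
with the convention that the two signs are summed separately.

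The only place I expect to have to be careful is verifying the Möbius inversion step rigorously, in particular checking that the divisibility condition $r\mid m-n$ on the right-hand side is really equivalent to the congruence $m\equiv n\pmod{r}$ once the coprimality hypothesis $(mn,q)=1$ is used; everything else is bookkeeping. No deep input is needed—this lemma is essentially a standard computation packaged to incorporate the even-parity restriction.
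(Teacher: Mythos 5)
Your overall strategy is exactly the one the paper uses: express the sum over all characters mod $q$ as a sum over conductors $r\mid q$ of primitive characters, invoke the plain orthogonality relation, Möbius-invert, and then project onto even characters with the indicator $\tfrac{1}{2}(1+\chi(-1))$, using $\chi(-1)\chi(m)=\chi(-m)$ to turn the second term into the $r\mid m+n$ case. Both stages and the final conclusion are correct.

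However, there is a slip in the middle of your first stage. You write that ``the right-hand side equals $\sum_{r\mid q}\varphi(r)\,\mathbf{1}_{r\mid(m-n)}$.'' That is false: if $q\mid m-n$ the orthogonality relation's right-hand side is $\varphi(q)$, whereas $\sum_{r\mid q}\varphi(r)\,\mathbf{1}_{r\mid(m-n)}=\sum_{r\mid q}\varphi(r)=q$. Moreover, if both sides were already written as a sum over $r\mid q$ there would be nothing left for Möbius inversion to do, and the inversion certainly would not produce the $\mu(d)$ weight out of thin air. The correct setup is the one the paper writes: define $f(r)=\sum_{\chi^{*}\ \textrm{prim}\ (\textrm{mod}\ r)}\chi^{*}(m)\overline{\chi^{*}}(n)$, observe that $\sum_{r\mid q}f(r)=\varphi(q)\,\mathbf{1}_{m\equiv n\ (\textrm{mod}\ q)}$, and then Möbius inversion gives
$$
f(q)=\sum_{q=dr}\mu(d)\,\varphi(r)\,\mathbf{1}_{r\mid m-n},
$$
which is the identity you wanted. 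Your parenthetical worry about whether $r\mid m-n$ is equivalent to $m\equiv n\ (\textrm{mod}\ r)$ is misplaced (that equivalence is definitional and needs no coprimality); the thing to be careful about is precisely the step you glossed over. Once that is fixed, your stage-two parity argument is correct and matches the paper verbatim.
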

\begin{proof}
Let
\begin{displaymath}
f(h)={\sum_{\chi(\textrm{mod}\ h)}{\!\!\!\!\!\!}}^{\textstyle{*}}\ \ \chi(m)\overline\chi(n)
\end{displaymath}
where $\sum^{*}$ denotes summation over primitive characters $\chi$. Then for $(mn,q)=1$ we have
\begin{displaymath}
\sum_{h|q}f(h)=\sum_{\chi(\textrm{mod}\ q)}\chi(m)\overline\chi(n)=\left\{ \begin{array}{ll}
\varphi(q)\ \ \textrm{if $m\equiv n\ (\textrm{mod}\ q)$}\\
0 \qquad\textrm{otherwise.}
\end{array} \right.
\end{displaymath}
Using the M\"obius inversion we obtain
\begin{displaymath}
{\sum_{\chi(\textrm{mod}\ q)}{\!\!\!\!\!\!}}^{\textstyle{*}}\ \ \chi(m)\overline\chi(n)=f(q)=\sum_{\substack{h|q\\h|m-n}}\varphi(h)\mu(q/h).
\end{displaymath}
It follows from this identity that
$$\big| \mathscr{C}_q\big| ={\sum_{\chi(\textrm{mod}\ q)}{\!\!\!\!\!\!}}^{\textstyle{*}} \ 1 = \sum_{k|q} \varphi(k) \mu(\tfrac{q}{k}), $$
which justifies an above remark.  Our lemma now follows by noting that
\begin{displaymath}
{\sum_{\substack{\chi(\textrm{mod}\ q)\\\chi(-1)=1}}{\!\!\!\!\!\!}}^{\textstyle{*}}\ \ \chi(m)\overline\chi(n)={\sum_{\chi(\textrm{mod}\ q)}{\!\!\!\!\!\!}}^{\textstyle{*}}\ \ \bigg[\frac{1+\chi(-1)}{2}\bigg]\chi(m)\overline\chi(n).
\end{displaymath}
\end{proof}

\begin{lemma}
Let $G(s)$ be an even, entire function with rapid decay as $|s|\rightarrow\infty$ in any fixed vertical strip $A\leq\sigma\leq B$ and with $G(0)=1$. Let
\begin{equation}\label{1}
W_{\alpha,\beta}^{\pm}(x)=\frac{1}{2\pi i}\int_{(1)}G(s)H(s)g_{\alpha,\beta}^{\pm}(s)x^{-s}\frac{ds}{s},
\end{equation}
where
\begin{displaymath}
g_{\alpha,\beta}^{+}(s)=\frac{\Gamma(\frac{1/2+\alpha+s}{2})\Gamma(\frac{1/2+\beta+s}{2})}{\Gamma(\frac{1/2+\alpha}{2})\Gamma(\frac{1/2+\beta}{2})}, \quad \ g_{\alpha,\beta}^{-}(s)=\frac{\Gamma(\frac{1/2-\alpha+s}{2})\Gamma(\frac{1/2-\beta+s}{2})}{\Gamma(\frac{1/2+\alpha}{2})\Gamma(\frac{1/2+\beta}{2})},
\end{displaymath}
and
\begin{displaymath}
H(s)=\frac{(\frac{\alpha+\beta}{2})^2-s^2}{(\frac{\alpha+\beta}{2})^2}\qquad(\alpha+\beta\ne0).
\end{displaymath}
Then for $\chi_1,\chi_2\in\mathscr{C}_{q}^{+}$, $\alpha\neq-\beta$ we have that
\begin{eqnarray*}
L({\scriptstyle{\frac{1}{2}}}+\alpha,\chi_1)L({\scriptstyle{\frac{1}{2}}}+\beta,\chi_2)&=&\sum_{m,n}\frac{\chi_1(m)\chi_2(n)}{m^{1/2+\alpha}n^{1/2+\beta}}W_{\alpha,\beta}^{+}\bigg(\frac{\pi mn}{q}\bigg)\nonumber\\
&&\quad+\varepsilon_{\chi_1}\varepsilon_{\chi_2}\bigg(\frac{q}{\pi}\bigg)^{-\alpha-\beta}\sum_{m,n}\frac{\overline{\chi_1}(m)\overline{\chi_2}(n)}{m^{1/2-\alpha}n^{1/2-\beta}}W_{\alpha,\beta}^{-}\bigg(\frac{\pi mn}{q}\bigg).
\end{eqnarray*}
\end{lemma}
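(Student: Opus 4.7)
The plan is to derive this two-variable approximate functional equation by shifting a contour in the symmetric integral
\begin{equation*}
I := \frac{1}{2\pi i}\int_{(2)} \frac{G(s)H(s)}{s}\,\Lambda\bigl(\tfrac{1}{2}+\alpha+s,\chi_1\bigr)\,\Lambda\bigl(\tfrac{1}{2}+\beta+s,\chi_2\bigr)\,ds.
\end{equation*}
Since $\chi_1,\chi_2$ are primitive even characters to the modulus $q$, both $\Lambda(\cdot,\chi_i)$ are entire, and together with the entirety of $G$ and the polynomial $H$ the only singularity of the integrand is the simple pole at $s=0$ from $1/s$. Because $G(0)=H(0)=1$, the residue at $s=0$ equals $\Lambda(\tfrac{1}{2}+\alpha,\chi_1)\Lambda(\tfrac{1}{2}+\beta,\chi_2)$.

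First I would compute $I$ on the initial contour $\mathrm{Re}\,s=2$. Writing $\Lambda(\tfrac{1}{2}+\alpha+s,\chi_i)=(q/\pi)^{(1/2+\alpha+s)/2}\Gamma(\tfrac{1/2+\alpha+s}{2})L(\tfrac{1}{2}+\alpha+s,\chi_i)$ and expanding the $L$-factors as absolutely convergent Dirichlet series, then interchanging sum and integral, the inner integral is exactly the Mellin representation of $W^+_{\alpha,\beta}$ once the constant $\Gamma(\tfrac{1/2+\alpha}{2})\Gamma(\tfrac{1/2+\beta}{2})$ is factored out. This identifies
\begin{equation*}
I=(q/\pi)^{(1+\alpha+\beta)/2}\,\Gamma\bigl(\tfrac{1/2+\alpha}{2}\bigr)\Gamma\bigl(\tfrac{1/2+\beta}{2}\bigr)\sum_{m,n}\frac{\chi_1(m)\chi_2(n)}{m^{1/2+\alpha}n^{1/2+\beta}}\,W^+_{\alpha,\beta}\!\Bigl(\tfrac{\pi mn}{q}\Bigr).
\end{equation*}

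Next I would shift the contour from $\mathrm{Re}\,s=2$ to $\mathrm{Re}\,s=-2$; the horizontal tails vanish thanks to the rapid decay of $G$ in vertical strips, so by the residue theorem $I=I'+\Lambda(\tfrac{1}{2}+\alpha,\chi_1)\Lambda(\tfrac{1}{2}+\beta,\chi_2)$, where $I'$ denotes the same integrand on $\mathrm{Re}\,s=-2$. Substituting $s\mapsto-s$ in $I'$ and invoking the evenness of $G$ and $H$ yields
\begin{equation*}
I'=-\frac{1}{2\pi i}\int_{(2)}\frac{G(s)H(s)}{s}\,\Lambda\bigl(\tfrac{1}{2}+\alpha-s,\chi_1\bigr)\,\Lambda\bigl(\tfrac{1}{2}+\beta-s,\chi_2\bigr)\,ds,
\end{equation*}
and applying the functional equation $\Lambda(\tfrac{1}{2}+\alpha-s,\chi_i)=\varepsilon_{\chi_i}\Lambda(\tfrac{1}{2}-\alpha+s,\overline{\chi_i})$ to each factor converts $I'$ into the same kind of integral as $I$ but with $(\alpha,\beta,\chi_1,\chi_2)$ replaced by $(-\alpha,-\beta,\overline{\chi_1},\overline{\chi_2})$ and with an overall factor $-\varepsilon_{\chi_1}\varepsilon_{\chi_2}$. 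Repeating the Dirichlet-series expansion of the first step with these substitutions produces the dual sum weighted by $W^-_{\alpha,\beta}$.

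Finally I combine the two evaluations through $\Lambda(\tfrac{1}{2}+\alpha,\chi_1)\Lambda(\tfrac{1}{2}+\beta,\chi_2)=I-I'$ and divide both sides by the common factor $(q/\pi)^{(1+\alpha+\beta)/2}\Gamma(\tfrac{1/2+\alpha}{2})\Gamma(\tfrac{1/2+\beta}{2})$; on the left this strips the $\Lambda$-factors down to $L(\tfrac{1}{2}+\alpha,\chi_1)L(\tfrac{1}{2}+\beta,\chi_2)$, while on the right the extra minus sign from $-I'$ combined with the minus sign already generated by the substitution $s\mapsto-s$ yields $+\varepsilon_{\chi_1}\varepsilon_{\chi_2}$ in front of the dual sum, and the difference of exponents $(1-\alpha-\beta)/2-(1+\alpha+\beta)/2=-(\alpha+\beta)$ collapses the remaining scaling to $(q/\pi)^{-\alpha-\beta}$. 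The hypothesis $\alpha\neq-\beta$ is used only to ensure that $H(s)$ is well defined. No serious analytic difficulty arises; the main care is the bookkeeping of gamma-factor normalizations and of the two sign flips (from $s\mapsto-s$ and from $I-I'$) that conspire to produce the stated $+$ in front of the dual sum.
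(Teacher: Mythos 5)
Your proof is correct and follows essentially the same route as the paper: form the symmetric integral of the product $\Lambda(\tfrac{1}{2}+\alpha+s,\chi_1)\Lambda(\tfrac{1}{2}+\beta+s,\chi_2)$ against $G(s)H(s)\,ds/s$, shift the contour to pick up the residue at $s=0$, substitute $s\mapsto -s$ (using evenness of $G$ and $H$), apply the functional equation, and expand each $L$-factor as a Dirichlet series so that the inner integrals collapse to $W^{\pm}_{\alpha,\beta}$. The only (immaterial) differences are that the paper places the normalizing factor $\Gamma(\tfrac{1/2+\alpha}{2})\Gamma(\tfrac{1/2+\beta}{2})$ inside the integrand rather than dividing at the end, and starts on $\mathrm{Re}\,s=1$ rather than $\mathrm{Re}\,s=2$.
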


\noindent {\bf Some Remarks.}  
\begin{enumerate}  
\item An admissible choice of $G$ in the above lemma is $G(s)=\exp(s^2)$.  
\item The purpose of the function $H(s)$ in the above lemma is to cancel the poles of the functions $\zeta_q(1\pm(\alpha+\beta)+2s)$ at $s=\mp(\alpha+\beta)/2$ which appear in the next lemma. This substantially simplifies our later calculations. A similar effect has been previously observed by Conrey, Iwaniec and Soundararajan (see \textsection 3 of \cite{CIS}).
\end{enumerate}

\begin{proof}
Consider the integral
\begin{displaymath}
I_{\alpha,\beta}=\frac{1}{2\pi i}\int_{1-i\infty}^{1+i\infty}G(s)H(s)\frac{\Lambda(1/2+\alpha+s,\chi_1)\Lambda(1/2+\beta+s,\chi_2)}{\Gamma(\frac{1/2+\alpha}{2})\Gamma(\frac{1/2+\beta}{2})}\frac{ds}{s}.
\end{displaymath}
Shifting the line of integration to $\text{Re } \!{s}=-1$ and using Cauchy's theorem we obtain
\begin{displaymath}
I_{\alpha,\beta}=R_0+\frac{1}{2\pi i}\int_{-1-i\infty}^{-1+i\infty}G(s)H(s)\frac{\Lambda(1/2+\alpha+s,\chi_1)\Lambda(1/2+\beta+s,\chi_2)}{\Gamma(\frac{1/2+\alpha}{2})\Gamma(\frac{1/2+\beta}{2})}\frac{ds}{s},
\end{displaymath}
where $R_0$ is the term arising from the residue of the integrand at $s=0$. Evidently,  
\begin{displaymath}
R_0=\bigg(\frac{q}{\pi}\bigg)^{(1+\alpha+\beta)/2}L({\scriptstyle{\frac{1}{2}}}+\alpha,\chi_1)L({\scriptstyle{\frac{1}{2}}}+\beta,\chi_2).
\end{displaymath}
By making the change of variables $s$ to $-s$ and using (\ref{fe}), we have that
\begin{displaymath}
R_0=I_{\alpha,\beta}+\frac{1}{2\pi i}\int_{1-i\infty}^{1+i\infty}G(s)H(s)\frac{\Lambda(1/2-\alpha+s,\overline{\chi_1})\Lambda(1/2-\beta+s,\overline{\chi_2})}{\Gamma(\frac{1/2+\alpha}{2})\Gamma(\frac{1/2+\beta}{2})}\frac{ds}{s}.
\end{displaymath}
The lemma now follows by using (\ref{fe}) to express the $\Lambda$-functions as Dirichlet series and then integrating term-by-term.
\end{proof}

\begin{lemma}\label{21}
Let
\begin{displaymath}
S_{\alpha,\beta}^{+}(x)=\sum_{\substack{ n=1 \\ (n,q)=1}}^\infty \frac{W_{\alpha,\beta}^{+}(n^2/x)}{n^{1+\alpha+\beta}}\quad \text{ and } \quad S_{\alpha,\beta}^{-}(x)=\sum_{\substack{ n=1 \\ (n,q)=1}}^\infty \frac{W_{\alpha,\beta}^{-}(n^2/x)}{n^{1-\alpha-\beta}}.
\end{displaymath}
Then, for any $\varepsilon>0$ and $\alpha\neq-\beta$, we have that
\begin{displaymath}
S_{\alpha,\beta}^{+}(x)=\zeta_{q}(1+\alpha+\beta)+O(\tau(q)x^{-1/2+\varepsilon})
\end{displaymath}
and
$$S_{\alpha,\beta}^{-}(x)=g_{\alpha,\beta}^{-}(0)\zeta_{q}(1-\alpha-\beta)+O(\tau(q)x^{-1/2+\varepsilon}), $$
where $\tau(q)$ is the number of divisors of $q$ and the function $\zeta_q(s)$ is defined by
\begin{displaymath}
\zeta_q(s)=\zeta(s)\prod_{p|q}\bigg(1-\frac{1}{p^s}\bigg).
\end{displaymath}
\end{lemma}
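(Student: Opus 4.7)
The plan is to evaluate both sums by a Mellin--Perron contour shift. Substituting the integral representation (\ref{1}) into the definition of $S_{\alpha,\beta}^{+}(x)$ and interchanging the summation and integration (justified by absolute convergence once the contour is taken sufficiently far to the right) yields
\begin{equation*}
S_{\alpha,\beta}^{+}(x) \;=\; \frac{1}{2\pi i}\int_{(1)} G(s)\,H(s)\,g_{\alpha,\beta}^{+}(s)\,\zeta_q(1+\alpha+\beta+2s)\,x^{s}\,\frac{ds}{s},
\end{equation*}
where I have used $\sum_{(n,q)=1} n^{-(1+\alpha+\beta+2s)} = \zeta_q(1+\alpha+\beta+2s)$. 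The analogous identity for $S_{\alpha,\beta}^{-}(x)$ has $\zeta_q(1-\alpha-\beta+2s)$ and $g_{\alpha,\beta}^{-}(s)$ in place of the corresponding $(+)$ quantities.

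I would then shift the line of integration down to $\mathrm{Re}(s)=-\tfrac{1}{2}+\varepsilon$. Inside the strip swept out the integrand has a simple pole at $s=0$ coming from the factor $1/s$, together with an apparent simple pole at $s=\mp(\alpha+\beta)/2$ coming from $\zeta_q$ (with the sign corresponding to the $\pm$ sum). The latter pole is cancelled exactly by the zero of $H(s)$ at the same point---this is precisely the design purpose of the factor $H$ noted in the second remark above. Consequently only the residue at $s=0$ survives. Since $G(0)=H(0)=g_{\alpha,\beta}^{+}(0)=1$, that residue equals $\zeta_q(1+\alpha+\beta)$ in the $(+)$ case; in the $(-)$ case $g_{\alpha,\beta}^{-}(0)\neq1$ in general, yielding $g_{\alpha,\beta}^{-}(0)\,\zeta_q(1-\alpha-\beta)$. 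These are the advertised main terms.

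To bound the shifted integral, note that on $\mathrm{Re}(s)=-\tfrac{1}{2}+\varepsilon$ one has $|x^{s}|=x^{-1/2+\varepsilon}$; the Euler-factor correction $\prod_{p|q}\bigl(1-p^{-(1\pm(\alpha+\beta)+2s)}\bigr)$ is bounded by $2^{\omega(q)}\leq\tau(q)$; Stirling's formula gives polynomial-in-$|\mathrm{Im}(s)|$ bounds for $g_{\alpha,\beta}^{\pm}(s)$ and $H(s)$; and the convexity bound for $\zeta$ applies since the smallness $|\alpha|,|\beta|\leq 2/\log q$ keeps the shifted line just inside the critical strip for $q$ large. All polynomial growth in $|\mathrm{Im}(s)|$ is absorbed by the rapid decay of $G(s)$, so the remaining integral is $O\bigl(\tau(q)\,x^{-1/2+\varepsilon}\bigr)$, producing the claimed error term.

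The one point that requires real care is the pole cancellation. Without the factor $H$, the residue at $s=\mp(\alpha+\beta)/2$ would contribute a term roughly of size $x^{-(\alpha+\beta)/2}/(\alpha+\beta)$, which is singular as $\alpha+\beta\to 0$ and would force an awkward limiting argument in the later application to Proposition \ref{prop2}. The factor $H$ removes this singularity cleanly and leaves a statement valid uniformly for all $\alpha\ne-\beta$ in the relevant range.
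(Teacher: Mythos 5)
Your proof is correct and follows essentially the same route as the paper: you write $S_{\alpha,\beta}^{\pm}(x)$ as a contour integral by substituting the Mellin representation of $W_{\alpha,\beta}^{\pm}$ and summing over $n$ to produce $\zeta_q$, shift the contour to $\mathrm{Re}(s)=-\tfrac12+\varepsilon$, collect the residue at $s=0$, and use the designed cancellation between the zero of $H$ and the pole of $\zeta_q$ at $s=\mp(\alpha+\beta)/2$. The paper simply asserts that the shifted integral is trivially $\ll\tau(q)x^{-1/2+\varepsilon}$, while you (correctly) unpack this by bounding the Euler-factor correction by $2^{\omega(q)}\leq\tau(q)$, invoking Stirling for the Gamma ratios, and using the rapid decay of $G$; the substance is identical.
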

\begin{proof}
From \eqref{1} we observe that
\begin{displaymath}
S_{\alpha,\beta}^{+}(x)=\frac{1}{2\pi i}\int_{(1)}G(s)H(s)g_{\alpha,\beta}^{+}(s)x^{s}\zeta_{q}(1+\alpha+\beta+2s)\frac{ds}{s}.
\end{displaymath}
We now move the line of integration to $\text{Re } \!{s}=-1/2+\varepsilon$, encountering only a simple pole of the integrand at $s=0$. We note that the simple pole of $\zeta_{q}(1+\alpha+\beta+2s)$ at $s=-(\alpha+\beta)/2$  is canceled by a zero $H(s)$. The residue of the integrand at $s=0$ is $\zeta_{q}(1+\alpha+\beta)$. Also, the integral along the new contour is trivially $\ll \tau(q)x^{-1/2+\varepsilon}$. This implies the first claim of the lemma.  The second claim can be proved in a similar manner. 
\end{proof}

\begin{lemma}
Assume $\alpha\neq-\beta$ and let
\begin{displaymath}
\mathscr{B}(m_1,n_1;\alpha,\beta)={\sum_{\chi(\emph{mod}\ \!q)}{\!\!\!\!\!}}^{+}\ L({\scriptstyle{\frac{1}{2}}}+\alpha,\chi)L({\scriptstyle{\frac{1}{2}}}+\beta,\overline{\chi})\chi(m_1)\overline{\chi}(n_1).
\end{displaymath}
Then for $(m_1,n_1)=1$ and $(m_1n_1,q)=1$ we have
\begin{eqnarray*}
\mathscr{B}(m_1,n_1;\alpha,\beta)&=&\frac{\varphi^{+}(q)}{\sqrt{m_1n_1}}\bigg(\frac{\zeta_{q}(1+\alpha+\beta)}{m_{1}^{\beta}n_{1}^{\alpha}}+\bigg(\frac{q}{\pi}\bigg)^{-\alpha-\beta}g_{\alpha,\beta}^{-}(0)\frac{\zeta_{q}(1-\alpha-\beta)}{m_{1}^{-\alpha}n_{1}^{-\beta}}\bigg)\nonumber\\
&&\qquad\qquad\qquad+O(\beta(m_1,n_1)+q^{1/2+\epsilon}),
\end{eqnarray*}
where $\beta(m_1,n_1)$ satisfies
\begin{displaymath}
\sum_{m_1,n_1\leq y}\frac{\beta(m_1,n_1)}{\sqrt{m_1n_1}}\ll yq^{1/2+\varepsilon}.
\end{displaymath}
\end{lemma}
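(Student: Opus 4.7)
\emph{Approach.} The plan is to apply Lemma 4.2 with $\chi_1 = \chi$ and $\chi_2 = \overline{\chi}$, using the fact that for even primitive $\chi$ the root numbers satisfy $\varepsilon_{\chi}\varepsilon_{\overline{\chi}} = |\tau(\chi)|^2/q = 1$. This expresses $L(\tfrac12+\alpha,\chi)L(\tfrac12+\beta,\overline{\chi})$ as two absolutely convergent Dirichlet sums weighted by the smooth cutoffs $W^{\pm}_{\alpha,\beta}(\pi mn/q)$. Multiplying by $\chi(m_1)\overline{\chi}(n_1)$, summing over $\chi \in \mathscr{C}_{q}^{+}$, and exchanging the order of summation (legitimate by the rapid decay of $W^{\pm}$) reduces the problem to evaluating the character sums ${\sum_{\chi}}^{+}\chi(mm_1)\overline{\chi}(nn_1)$ and ${\sum_{\chi}}^{+}\chi(nm_1)\overline{\chi}(mn_1)$, each of which is supplied by Lemma 4.1.

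\emph{Main terms.} Writing Lemma 4.1 as a sum over pairs $q=dr$ with either $r\mid A-B$ or $r\mid A+B$, the dominant contribution is the ``trivial'' diagonal $A=B$, for which $r\mid 0$ automatically for every $r\mid q$ and M\"obius inversion produces the full coefficient $\tfrac{1}{2}\sum_{q=dr}\mu(d)\varphi(r)=\varphi^{+}(q)$. For the $W^{+}$-part, the diagonal $mm_1=nn_1$ combined with $(m_1,n_1)=1$ parametrizes as $m=n_1 k,\,n=m_1 k$ with $(k,q)=1$, giving
\begin{displaymath}
\frac{\varphi^{+}(q)}{n_1^{1/2+\alpha}m_1^{1/2+\beta}}\,S^{+}_{\alpha,\beta}\!\Big(\frac{q}{\pi m_1 n_1}\Big).
\end{displaymath}
Analogously, the $W^{-}$-diagonal $nm_1=mn_1$ parametrizes as $m=m_1 l,\,n=n_1 l$ and yields
\begin{displaymath}
\frac{\varphi^{+}(q)}{m_1^{1/2-\alpha}n_1^{1/2-\beta}}\,\Big(\frac{q}{\pi}\Big)^{-\alpha-\beta}\,S^{-}_{\alpha,\beta}\!\Big(\frac{q}{\pi m_1 n_1}\Big).
\end{displaymath}
Invoking Lemma 4.3 replaces each $S^{\pm}_{\alpha,\beta}$ by its zeta-value and contributes an error that, after combination with the smooth prefactor, is bounded by $\varphi^{+}(q)\tau(q)q^{-1/2+\varepsilon} \ll q^{1/2+\varepsilon}$ uniformly in $m_1,n_1\leq y<\sqrt{q}$. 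Unpacking the powers of $m_1,n_1$ in the two prefactors produces precisely the two main terms in the stated form.

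\emph{Off-diagonal contributions.} Everything else is absorbed into $\beta(m_1,n_1)$: namely (i) the $d=1$, $r=q$ terms for which $q\mid mm_1\pm nn_1$ but $mm_1\ne nn_1$ (and the analogues from the $W^{-}$-sum); and (ii) the terms coming from proper divisors $r\mid q$, $r<q$. The rapid decay of $W^{\pm}_{\alpha,\beta}(\pi mn/q)$ confines the effective range of $(m,n)$ essentially to $mn \ll q^{1+\varepsilon}$. To establish $\sum_{m_1,n_1\leq y}\beta(m_1,n_1)/\sqrt{m_1n_1}\ll yq^{1/2+\varepsilon}$, one swaps the order of summation: for each fixed $(m,n)$ in the effective range and each divisor $r$ of $q$, the condition $r\mid mm_1\pm nn_1$ (or its $W^{-}$ analogue) confines $(m_1,n_1)$ to a union of residue classes modulo $r$, and a divisor-type count over $q=dr$ delivers the bound.

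\emph{Main obstacle.} The delicate step is the averaged off-diagonal estimate for $\beta(m_1,n_1)$: one must separately handle the two sign conditions $r\mid A\pm B$ coming from the even-character projection, excise the genuine $A=B$ diagonal without double-counting, and control the proper-divisor contributions uniformly in $m_1,n_1$ using only the rapid decay of $W^{\pm}$. The coprimality hypotheses $(m_1,n_1)=1$ and $(m_1n_1,q)=1$ are crucial: they make the parametrization of each diagonal explicit and ensure that the $k$- and $l$-sums match the definition of $S^{\pm}_{\alpha,\beta}$ in Lemma 4.3.
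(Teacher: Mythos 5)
Your proposal is correct and follows essentially the same route as the paper: substitute $\chi_1=\chi$, $\chi_2=\overline\chi$ into Lemma 4.2 (with $\varepsilon_\chi\varepsilon_{\overline\chi}=1$), apply the orthogonality relation of Lemma 4.1, extract the two diagonals $mm_1=nn_1$ and $mn_1=nm_1$ via the parametrizations $m=n_1k,\,n=m_1k$ and $m=m_1l,\,n=n_1l$, recognize the resulting sums as $S^{\pm}_{\alpha,\beta}(q/\pi m_1n_1)$ and evaluate them by Lemma 4.3, and collect all off-diagonal terms into $\beta(m_1,n_1)$. The only cosmetic difference is that the paper simply defines $\beta(m_1,n_1)=\sum_{mm_1\ne nn_1}(mm_1\pm nn_1,q)\,|W^{\pm}(\pi mn/q)|/\sqrt{mn}$ and cites Iwaniec--Sarnak, Section 4, for the averaged bound $\sum_{m_1,n_1\leq y}\beta(m_1,n_1)/\sqrt{m_1n_1}\ll yq^{1/2+\varepsilon}(\log yq)^4$, whereas you sketch the sum-swap and divisor-count argument behind that citation.
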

\begin{proof}
With $\chi_1=\chi$, $\chi_2=\bar{\chi}$, Lemma 4.1 and Lemma 4.2 imply that
\begin{eqnarray}\label{3}
\mathscr{B}(m_1,n_1;\alpha,\beta)&=&\frac{1}{2}\sum_{q=dr}\mu(d)\varphi(r){\sum_{r|mm_1\pm nn_1}{\!\!\!\!\!\!\!\!}}^{\star}\ \ \frac{W_{\alpha,\beta}^{+}(\frac{\pi mn}{q})}{m^{1/2+\alpha}n^{1/2+\beta}}\nonumber\\
&&\quad+\frac{1}{2}\bigg(\frac{q}{\pi}\bigg)^{-\alpha-\beta}{\sum_{q=dr}\mu(d)\varphi(r)\sum_{r|mn_1\pm nm_1}{\!\!\!\!\!\!\!\!}}^{\star}\ \ \frac{W_{\alpha,\beta}^{-}(\frac{\pi mn}{q})}{m^{1/2-\alpha}n^{1/2-\beta}},
\end{eqnarray}
where $\sum^{\star}$ denotes summation over all $(mn,q)=1$. The main contribution to $\mathscr{B}(m_1,n_1;\alpha,\beta)$ comes from the diagonal terms $mm_1=nn_1$ and $mn_1=nm_1$ in the first and second sums on the right-hand side of (\ref{3}), respectively. For $(m_1,n_1)=1$, this contribution is
\begin{eqnarray*}
&&\varphi^{+}(q)\bigg( \ {\sum_{mm_1=nn_1}{\!\!\!\!\!\!\!}}^{\star}\ \ \frac{W_{\alpha,\beta}^{+}(\frac{\pi mn}{q})}{m^{1/2+\alpha}n^{1/2+\beta}}+\bigg(\frac{q}{\pi}\bigg)^{-\alpha-\beta}{\sum_{mn_1=nm_1}{\!\!\!\!\!\!\!}}^{\star}\ \ \frac{W_{\alpha,\beta}^{-}(\frac{\pi mn}{q})}{m^{1/2-\alpha}n^{1/2-\beta}}\bigg)\nonumber\\
&& \quad \quad \quad \quad  = \varphi^{+}(q)\bigg(\frac{S_{\alpha,\beta}^{+}(\frac{q}{\pi m_1n_1})}{n_{1}^{1/2+\alpha}m_{1}^{1/2+\beta}}+\bigg(\frac{q}{\pi}\bigg)^{-\alpha-\beta}\frac{S_{\alpha,\beta}^{-}(\frac{q}{\pi m_1n_1})}{m_{1}^{1/2-\alpha}n_{1}^{1/2-\beta}}\bigg),
\end{eqnarray*}
where $S_{\alpha,\beta}^{\pm}(x)$ are defined in Lemma 4.3. By Lemma 4.3 the above expression is equal to
\begin{equation*}\label{22}
\frac{\varphi^{+}(q)}{\sqrt{m_1n_1}}\bigg(\frac{\zeta_{q}(1+\alpha+\beta)}{m_{1}^{\beta}n_{1}^{\alpha}}+\bigg(\frac{q}{\pi}\bigg)^{-\alpha-\beta}g_{\alpha,\beta}^{-}(0)\frac{\zeta_{q}(1-\alpha-\beta)}{m_{1}^{-\alpha}n_{1}^{-\beta}}\bigg)+O(q^{1/2+\epsilon}).
\end{equation*}
All the other terms in \eqref{3} contribute at most
\begin{displaymath}
\beta(m_1,n_1)=\sum_{mm_1\ne nn_1}\frac{(mm_1\pm nn_1,q)}{\sqrt{mn}}\bigg|W_{\alpha,\beta}^{\pm}\bigg(\frac{\pi mn}{q}\bigg)\bigg|.
\end{displaymath}
Using the estimate $|W_{\alpha,\beta}^{\pm}(x)|\ll(1+x)^{-1}$ one can show that (see \cite{IS}, Section 4)
\begin{equation*}\label{4}
\sum_{m_1,n_1\leq y}\frac{\beta(m_1,n_1)}{\sqrt{m_1n_1}}\ll yq^{1/2+\varepsilon}(\log yq)^4.
\end{equation*}
The lemma now follows from the above estimates.
\end{proof}

\begin{lemma}
For $d\leq y$ and $(d,q)=1$, let
\begin{displaymath}
S_{j}(d)=\sum_{\substack{n\leq y/d\\(n,dq)=1}}\frac{\mu(n)}{n}(\log n)^{j}P\bigg(\frac{\log y/dn}{\log y}\bigg).
\end{displaymath}
Then $S_j(d)=M_j(d)+O(E_j(d))$ where
\begin{displaymath}
M_0(d)=\frac{dq}{\varphi(dq)\log y}P'\bigg(\frac{\log y/d}{\log y}\bigg),\ M_1(d)=-\frac{dq}{\varphi(dq)}P\bigg(\frac{\log y/d}{\log y}\bigg),\ M_j(d)=0\ (j\geq2),
\end{displaymath}
and
\begin{displaymath}
E_j(d)=(\log y)^{j-2}(\log\log y)^4\bigg(1+\frac{d^\theta\log y}{y^\theta}\bigg)\prod_{p|dq}\bigg(1+\frac{1}{p^{1-2\delta}}\bigg)^2,
\end{displaymath}
with $\theta\gg1/\log\log y$ and $\delta=1/\log\log y$.
\end{lemma}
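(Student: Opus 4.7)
The plan is to prove Lemma 4.5 by Perron-style contour integration, which is the standard tool for such mollifier estimates.

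First, I would represent the polynomial factor by a Mellin transform. Using the classical identity $(\log x)^i/i! = (2\pi i)^{-1}\int_{(c)} x^s\,s^{-i-1}\,ds$ (valid for $x>1$) and writing $P(x)=\sum_{i\ge 1}a_i x^i$, one finds that for $n<y/d$,
\begin{equation*}
P\Big(\frac{\log y/dn}{\log y}\Big) = \frac{1}{2\pi i}\int_{(c)}\Big(\frac{y/d}{n}\Big)^s \mathcal{P}(s)\,ds,\qquad \mathcal{P}(s):=\sum_{i\ge 1}\frac{a_i\,i!}{(\log y)^i\,s^{i+1}}.
\end{equation*}
Substituting into $S_j(d)$ and interchanging sum with integral (using absolute convergence on $\text{Re}(s)=c>0$), the $(\log n)^j$ factor produces the $j$-th derivative of $F_d(s):=\sum_{(n,dq)=1}\mu(n)/n^s$, giving
\begin{equation*}
S_j(d)=\frac{(-1)^j}{2\pi i}\int_{(c)}\Big(\frac{y}{d}\Big)^s \mathcal{P}(s)\,F_d^{(j)}(s+1)\,ds,\qquad F_d(s)=\frac{1}{\zeta(s)}\prod_{p\mid dq}\Big(1-\frac{1}{p^s}\Big)^{-1}.
\end{equation*}

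Next, I would shift the contour from $\text{Re}(s)=c$ to $\text{Re}(s)=-\delta$ with $\delta=1/\log\log y$. Since this line lies safely inside the classical zero-free region of $\zeta$, the only singularity crossed is the pole of $\mathcal{P}(s)$ of order $\deg P+1$ at $s=0$, so $S_j(d)=M_j(d)+E_j(d)$ with $M_j(d)$ the residue and $E_j(d)$ the shifted integral. To compute the residue, Taylor-expand $F_d(1+s)=s\cdot\frac{dq}{\varphi(dq)}\bigl(1+O(s\log\log dq)\bigr)$ (using $1/\zeta(1+s)=s+O(s^2)$ and the expansion of $\prod_{p\mid dq}(1-p^{-1-s})^{-1}$) together with $(y/d)^s=\sum_k (\log y/d)^k s^k/k!$, then extract the coefficient of $s^{-1}$. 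For $j=0$, the simple zero of $F_d(1+s)$ at $s=0$ reduces the pole order by one, and the algebraic identity $\sum_{k\ge 0}(k+1)a_{k+1}u^k=P'(u)$ with $u=(\log y/d)/\log y$ delivers $M_0(d)$. For $j=1$, the constant term $F_d'(1)=dq/\varphi(dq)$ pairs with $\sum_k a_k u^k=P(u)$, and the sign $(-1)^1$ yields $M_1(d)$. For $j\ge 2$, $F_d^{(j)}(1)$ is of size $O((\log\log y)^j\cdot dq/\varphi(dq))$, and the full residue is comfortably absorbed into $E_j(d)$ by the factor $\prod_{p\mid dq}(1+1/p^{1-2\delta})^2$ (which is comparable to $(dq/\varphi(dq))^2$); hence one sets $M_j(d)=0$.

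The error bound comes from estimating the integral on $\text{Re}(s)=-\delta$. The factor $|(y/d)^s|=(y/d)^{-\delta}$ accounts for the $1+d^\theta\log y/y^\theta$ contribution (with $\theta$ comparable to $\delta$). On $\text{Re}(s+1)=1-\delta$, the Euler product $\prod_{p\mid dq}|1-p^{-(s+1)}|^{-1}$ is bounded by $\prod_{p\mid dq}(1+1/p^{1-2\delta})$; taking $j$ derivatives of $F_d$ brings out logarithmic derivatives of the same product, producing the squared factor $\prod_{p\mid dq}(1+1/p^{1-2\delta})^2$. Combined with the standard bound $|1/\zeta(s+1)|\ll\log(|t|+2)$ in the zero-free region, and the estimates $|\mathcal{P}(s)|\ll\delta^{-(N+1)}$ near $s=0$ and $|\mathcal{P}(s)|\ll 1/|s|^2$ at infinity (with $N=\deg P$), integration yields the remaining $(\log y)^{j-2}(\log\log y)^4$ factor. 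The main obstacle will be this uniform control of $F_d^{(j)}(s+1)$ just outside the region of absolute convergence while carrying the high-order pole of $\mathcal{P}(s)$ at $s=0$; the deliberate slack $\delta=1/\log\log y$ is precisely what converts the exponential savings from deeper zero-free regions into the polynomial $(\log\log y)^4$ overhead in $E_j(d)$. All required pieces---logarithmic-derivative bounds on $\zeta$, Mertens-type sums over $p\mid dq$, and contour estimates with a polar integrand---are standard in the Iwaniec--Sarnak mollifier literature, so the proof reduces to careful but routine bookkeeping.
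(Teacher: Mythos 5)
The paper does not actually prove this lemma; it simply cites it as Lemma~10 of Conrey's 1983 paper on zeros of derivatives of the $\xi$-function. Your reconstruction via the Mellin representation of the polynomial factor and contour shifting is the standard route for such mollifier lemmas (and, as far as I am aware, the same route Conrey follows), and the residue computations are correct: the simple zero of $F_d(1+s)$ at $s=0$ lowers the pole order and the index shift $i=k+1$ produces $P'$, giving $M_0$; taking one derivative removes that zero and the index match $i=k$ produces $P$, giving $M_1$ with the sign $(-1)^j$; and for $j\ge 2$ the residue carries no $\log y$ growth and is comfortably absorbed into $E_j(d)$ because $\prod_{p\mid dq}(1+p^{-(1-2\delta)})^2 \gg (dq/\varphi(dq))^2$.

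One point in your sketch needs to be stated with more care: the vertical line $\text{Re}(s)=-\delta$, i.e.\ $\text{Re}(s+1)=1-1/\log\log y$, is \emph{not} contained in the classical zero-free region of $\zeta$ for all heights, since that region narrows like $c/\log(|t|+2)$; past $|t| \gg (\log y)^{c}$ the line crosses zeros of $\zeta$, so your assertion that ``the only singularity crossed is the pole of $\mathcal P(s)$ at $s=0$'' is not literally true. The standard fix is either to truncate the original integral at height $T=(\log y)^A$ (the tails being negligible by the $|s|^{-2}$ decay of $\mathcal P$) and only then shift the truncated piece, or to use a contour that bends back toward $\text{Re}(s)=0$ as $|t|$ grows. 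This is routine bookkeeping but must appear explicitly; the resulting interchange of ``small $\delta$'' versus ``bounded height'' is also where the $(\log\log y)^4$ overhead in $E_j(d)$ actually comes from. Aside from that omission, your plan is correct.
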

\begin{proof}
This is Lemma 10 of Conrey \cite{C}.
\end{proof}

\begin{lemma}
Given that $f(d)=\prod_{p|d}f(p)$ with $f(p)=1+O(p^{-c})$ for $c>0$ and
\begin{displaymath}
J_j(y)={\sum_{d\leq y}{\!}}^{\star}\ \frac{\mu(d)^2}{d}f(d)\bigg(\log\frac{y}{d}\bigg)^j.
\end{displaymath}
Then we have
\begin{displaymath}
J_j(y)=\frac{1}{j+1}\prod_{p}\bigg(1-\frac{1}{p}\bigg)\bigg(1+\frac{f(p)}{p}\bigg)\prod_{p|q}\bigg(1+\frac{f(p)}{p}\bigg)^{-1}(\log y)^{j+1}+O((\log y)^j).
\end{displaymath}
\end{lemma}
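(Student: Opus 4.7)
The natural approach is a contour-integral (Perron-type) argument applied to the Dirichlet series
\[
F(s) \;=\; \sum_{(d,q)=1}\frac{\mu(d)^2 f(d)}{d^{1+s}} \;=\; \prod_{p\nmid q}\left(1+\frac{f(p)}{p^{1+s}}\right),
\]
which converges absolutely for $\operatorname{Re}(s)>0$. Using $\zeta(s+1)^{-1}=\prod_{p}(1-p^{-1-s})$, I would factor $F(s)=\zeta(s+1)\,A(s)$ with
\[
A(s) \;:=\; \prod_{p}\left(1-\frac{1}{p^{1+s}}\right)\!\left(1+\frac{f(p)}{p^{1+s}}\right)\cdot\prod_{p\mid q}\left(1+\frac{f(p)}{p^{1+s}}\right)^{\!-1}.
\]
The $p$-th factor of the first Euler product expands as $1+(f(p)-1)p^{-1-s}-f(p)p^{-2-2s}$, so the hypothesis $f(p)=1+O(p^{-c})$ forces absolute convergence in a half-plane $\operatorname{Re}(s)\geq -\delta_0$ for some $0<\delta_0<\min(c,1/2)$; hence $A(s)$ is holomorphic and uniformly bounded on vertical strips there.

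Next I would invoke the Mellin identity $(\log(y/d))^{j}\mathbf{1}_{d\leq y}=\frac{j!}{2\pi i}\int_{(\sigma)}(y/d)^{s}s^{-j-1}\,ds$ (valid for $j\geq 1$ by closing the contour on either side; for $j=0$ one uses the standard truncated Perron formula) to obtain
\[
J_j(y)\;=\;\frac{j!}{2\pi i}\int_{(\sigma)}F(s)\,\frac{y^{s}}{s^{j+1}}\,ds \qquad(\sigma>0),
\]
and then shift the contour to $\operatorname{Re}(s)=-\delta$ for any $0<\delta<\delta_0$, truncating at height $|t|\leq T$. Standard convexity and zero-free-region bounds for $\zeta(s+1)$, together with the boundedness of $A(s)$, show that the integral along the new contour and the truncation errors contribute together $\ll y^{-\delta}(\log y)^{C}$, which is absorbed into the claimed $O((\log y)^{j})$.

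The residue picked up at $s=0$ supplies the main term. Since $s\zeta(s+1)$ is entire with value $1$ at $s=0$,
\[
\operatorname{Res}_{s=0}F(s)\,\frac{y^{s}}{s^{j+1}} \;=\; \frac{1}{(j+1)!}\,\frac{d^{j+1}}{ds^{j+1}}\Big[A(s)\,y^{s}\,\bigl(s\zeta(s+1)\bigr)\Big]_{s=0}.
\]
Differentiating $y^{s}$ all $j+1$ times yields $A(0)(\log y)^{j+1}/(j+1)!$; any other distribution of derivatives produces at most $j$ factors of $\log y$ and is $O((\log y)^{j})$. Multiplying by $j!$ gives the main term $A(0)(\log y)^{j+1}/(j+1)$, and evaluation at $s=0$ of the Euler product gives
\[
A(0)\;=\;\prod_{p}\Big(1-\frac{1}{p}\Big)\Big(1+\frac{f(p)}{p}\Big)\cdot\prod_{p\mid q}\Big(1+\frac{f(p)}{p}\Big)^{\!-1},
\]
exactly the constant in the statement.

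The main obstacle is the analytic continuation and uniform vertical-strip control of $A(s)$ a small distance past $\operatorname{Re}(s)=0$; the condition $f(p)=1+O(p^{-c})$ is precisely what is needed to cancel the $p^{-1-s}$ terms in the Euler product, after which the contour shift and estimate on the shifted line are routine. Alternatively, one could derive the $j=0$ case directly by a Selberg--Delange / Wirsing-type argument and then obtain the general $j$ by partial summation, but the contour-integral version above is the most transparent route to both the main term and the error of size $(\log y)^{j}$.
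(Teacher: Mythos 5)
The paper does not give a proof of this lemma; it simply cites Lemma 11 of Conrey's paper \cite{C} on zeros of derivatives of $\xi(s)$. Your proof is therefore a self-contained reconstruction rather than a reproduction of an argument in the text. That said, the contour-integral (Perron) route you take is essentially the standard way such estimates are obtained, and your argument is correct: the factorization $F(s)=\zeta(s+1)A(s)$ is right (for $p\mid q$ the factor $(1+f(p)p^{-1-s})$ cancels, leaving just $(1-p^{-1-s})$, which is what your overall product gives); the hypothesis $f(p)=1+O(p^{-c})$ cancels the first-order term in the Euler factor $(1-p^{-1-s})(1+f(p)p^{-1-s})=1+(f(p)-1)p^{-1-s}-f(p)p^{-2-2s}$, giving holomorphy and uniform boundedness of $A$ in $\operatorname{Re}(s)\geq-\delta_0$; the Mellin identity for $j\geq 1$ is exact and absolutely convergent, while the $j=0$ case can indeed be handled by truncated Perron (or even more simply by Mertens-type partial summation); the residue computation at the order-$(j+2)$ pole correctly isolates the leading $A(0)(\log y)^{j+1}/(j+1)$ term; and the shifted integral contributes $O(y^{-\delta})$ by the convexity bound for $\zeta$, well within $O((\log y)^j)$. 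One small point worth making explicit: the constants $a_{i,j}$ appearing in the lower-order residue terms involve derivatives of $A(s)$ at $s=0$, and hence sums over $p\mid q$ of size $O(\log\log q)$, so the implied constant in $O((\log y)^j)$ depends mildly on $q$; since the lemma is applied with $y=q^\vartheta$, this is harmless, but it is worth flagging if one wants the statement to read cleanly.
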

\begin{proof}
This is Lemma 11 of Conrey \cite{C}.
\end{proof}

\section{Proof of Proposition \ref{prop2}}

In this section, we prove Proposition \ref{prop2}. Throughout the proof, we let $y=q^\vartheta$ and assume that $0<\vartheta<\tfrac{1}{2}$.  We begin by considering the mollified ``shifted" second moment
\begin{equation}\label{66}
J_{\alpha,\beta}(q)=%\frac{1}{\varphi^{+}(q)}
{\sum_{\chi(\textrm{mod}\ q)}{\!\!\!\!\!\!}}^{+}\ L(\tfrac{1}{2}+\alpha,\chi)L(\tfrac{1}{2}+\beta,\overline{\chi})|M(\chi)|^2,
\end{equation}
where $\alpha,\beta\in\mathbb{C}$ are small shifts satisfying $|\alpha|,|\beta| \leq (\log q)^{-1}$ and $\alpha\neq-\beta$. Applying Lemma 4.4, we have that
\begin{equation}\label{666}
\begin{split} 
J_{\alpha,\beta}(q)&= \sum_{m,n\leq y } \frac{\mu(m)\mu(n)}{\sqrt{mn}} P\Big(\frac{\log y/m}{\log y}\Big)P\Big(\frac{\log y/n}{\log y}\Big) \mathscr{B}(m,n;\alpha,\beta)
\\
&= \Sigma_1(\alpha,\beta)+\Sigma_2(\alpha,\beta) + O\big( y q^{1/2+\varepsilon}\big),
\end{split}
\end{equation}
where
$$ \Sigma_1(\alpha,\beta) =\varphi^+(q) \ \! \zeta_{q}(1+\alpha+\beta)\ {\sum_{d\leq y}{\!}}^{\star}{\sum_{\substack{m,n\leq y/d\\(m,n)=1}}{\!\!\!\!\!}}^{\star}\quad\frac{\mu(dm)\mu(dn)}{dm^{1+\beta}n^{1+\alpha}}P\bigg(\frac{\log y/dm}{\log y}\bigg)P\bigg(\frac{\log y/dn}{\log y}\bigg)$$
and 
\begin{eqnarray*}
\Sigma_2(\alpha,\beta)&=&\varphi^+(q) \ \! \bigg(\frac{q}{\pi}\bigg)^{-\alpha-\beta}g_{\alpha,\beta}^{-}(0)\zeta_{q}(1-\alpha-\beta) \times \nonumber\\
&&\qquad{\sum_{d\leq y}{\!}}^{\star}{\sum_{\substack{m,n\leq y/d\\(m,n)=1}}{\!\!\!\!\!}}^{\star}\quad\frac{\mu(dm)\mu(dn)}{dm^{1-\alpha}n^{1-\beta}}P\bigg(\frac{\log y/dm}{\log y}\bigg)P\bigg(\frac{\log y/dn}{\log y}\bigg).
\end{eqnarray*}
We can remove the restriction $(m,n)=1$  by writing $K_{\alpha,\beta}(q):=\Sigma_1(\alpha,\beta)+\Sigma_2(\alpha,\beta)$ as
\begin{equation}\label{6}
\begin{split}
&\varphi^+(q) \ {\sum_{cd\leq y}{\!}}^{\star}\ \frac{\mu(c)\mu(cd)^2}{c^2d} \ \times
\\
& \quad \quad \sum_{\substack{m,n\leq y/cd\\(mn,cdq)=1}}\frac{\mu(m)\mu(n)}{mn}P\bigg(\frac{\log y/cdm}{\log y}\bigg)P\bigg(\frac{\log y/cdn}{\log y}\bigg) Z_{q,\alpha,\beta}(m,n,c),
\end{split}
\end{equation}
where 
\begin{equation}\label{Z}
 Z_{q,\alpha,\beta}(m,n,c) = \frac{\zeta_{q}(1+\alpha+\beta)}{c^{\alpha+\beta}m^{\beta}n^{\alpha}}+\bigg(\frac{q}{\pi}\bigg)^{-\alpha-\beta}g_{\alpha,\beta}^{-}(0)\frac{\zeta_{q}(1-\alpha-\beta)}{c^{-\alpha-\beta}m^{-\alpha}n^{-\beta}}.
 \end{equation}

Though the function $\zeta_{q}(s)$ has a simple pole at $s=1$, we note that $Z_{q,\alpha,\beta}(m,n,c) $ is holomorphic in both $\alpha$ and $\beta$ in a small neighborhood of $\alpha=\beta=0$ (as can be seen, for instance, by computing the Laurent series expansion of each of the terms on the right-hand side of (\ref{Z}) about $\alpha=\beta=0$).  Therefore, the expressions in (\ref{66}) and (\ref{6}) provide an analytic continuation of the function $J_{\alpha,\beta}(q)-K_{\alpha,\beta}(q)$ to the region $|\alpha|,|\beta| \leq (\log q)^{-1}$; the function $K_{0,0}(q)$ must be defined in terms of the limit
$$ Z_{q,0,0}(m,n,c)= \lim_{\alpha\rightarrow 0} \left(\frac{\zeta_{q}(1+2\alpha)}{(c^2mn)^{\alpha}}+\bigg(\frac{q}{\pi}\bigg)^{-2\alpha} \frac{\zeta_{q}(1-2\alpha)}{(c^2mn)^{-\alpha}} \right).$$
Moreover, by the maximum modulus principle and (\ref{666}), we see that
$$ \Big| J_{\alpha,\beta}(q)-K_{\alpha,\beta}(q) \Big| \ll_\varepsilon y q^{1/2+\varepsilon}$$
uniformly for  $|\alpha|,|\beta| \leq (\log q)^{-1}$.  Hence, by Cauchy's Integral Theorem, 
\begin{equation*}
\begin{split}
\frac{d^{2k}}{d\alpha^k d\beta^k}  \Big[  J_{\alpha,\beta}(q)&-K_{\alpha,\beta}(q) \Big] \Big|_{\alpha=\beta=0} 
\\
&= \frac{(k!)^2}{(2\pi i)^2} \int_{\mathscr{C}_\alpha} \int_{\mathscr{C}_\beta} \frac{J_{w_\alpha,w_\beta}(q)-K_{w_\alpha,w_\beta}(q)}{(w_\alpha w_\beta)^{k+1}} dw_\alpha dw_\beta
\\
&\ll_{k,\varepsilon} y q^{1/2+2\varepsilon},
\end{split}
\end{equation*}
where $\mathscr{C}_\alpha$ (resp. $\mathscr{C}_\beta$) denotes the positively oriented circle in the complex plane centered at $\alpha=0$ (resp. $\beta=0$) with radius $(\log q)^{-1}.$  Thus, we have shown that
\begin{equation}\label{6666}
S_2(k,q) = \frac{d^{2k}}{d\alpha^k d\beta^k} K_{\alpha,\beta}(q) \Big|_{\alpha=\beta=0}  + \ O_{k,\varepsilon} \big(y q^{1/2+2\varepsilon}\big).
 \end{equation}
Writing
\begin{equation*}
\begin{split}
 \frac{d^{2k}}{d\alpha^k d\beta^k}& \ Z_{q,\alpha,\beta}(m,n,c)\Big|_{\alpha=\beta=0} 
 \\
 &= \sum_{h+i+j\leq 2k+1}\!\bigg(a_{h,i,j}(\log c)^{h}+b_{h,i,j}(\log q/c)^{h}\bigg)(\log m)^i(\log n)^j
 \end{split}
 \end{equation*}
for certain constants $a_{h,i,j}$ and $b_{h,i,j}$, we see that
\begin{equation}\label{dK}
\begin{split}
 &\frac{d^{2k}}{d\alpha^k d\beta^k}  K_{\alpha,\beta}(q)\Big|_{\alpha=\beta=0} 
 \\
 &\ = \varphi^+(q)\sum_{h+i+j\leq 2k+1}{\sum_{cd\leq y}{\!}}^{\star}\ \bigg(a_{h,i,j}(\log c)^{h}+b_{h,i,j}(\log cq)^{h}\bigg)\frac{\mu(c)\mu(cd)^2}{c^2d}S_i(cd)S_j(cd),
 \end{split}
 \end{equation}
where $S_i$ and $S_j$ are defined in Lemma 4.5.  It follows from Lemma 4.5 that
$$ S_i(cd) \ll_i \frac{cdq}{\varphi(cdq)} (\log y)^{i-1},$$
from which it can be seen that the contribution of the terms with $h+i+j\leq 2k$ to the sum on the right-hand side of (\ref{dK}) is 
$$\ll_k (\log q)^{2k-1}q\varphi^+(q)/\varphi(q) \ll_{k,\varepsilon} \varphi^+(q) (\log q)^{2k-1+\varepsilon}$$ since $q/\varphi(q) \ll \log\log q$.  It remains to consider the contribution of the terms with $h+i+j = 2k+1.$ In the notation of Lemma 4.5, it can be shown that
\begin{displaymath}
{\sum_{cd\leq y}{\!}}^{\star} \ \frac{S_i(cd)E_j(cd)}{c^2d}\ll_{i,j,\varepsilon} (\log y)^{i+j-2+\varepsilon}
\end{displaymath}
and
\begin{displaymath}
{\sum_{cd\leq y}{\!}}^{\star} \ \frac{E_i(cd)E_j(cd)}{c^2d}\ll_{i,j,\varepsilon}  (\log y)^{i+j-3+\varepsilon}.
\end{displaymath}
Hence the contribution of the error terms $E_i$ and $E_j$, arising from Lemma 4.5, to the terms in (\ref{dK}) with $h+i+j = 2k+1$ is $\ll_{k,\varepsilon} \varphi^+(q)(\log q)^{2k-1+\varepsilon}$.  Thus,
\begin{equation*}\label{dK2}
\begin{split}
 & \frac{d^{2k}}{d\alpha^k d\beta^k}  K_{\alpha,\beta}(q)\Big|_{\alpha=\beta=0} 
 \\
 & \quad = \varphi^+(q)\!\!\!\!\!\sum_{h+i+j = 2k+1}{\sum_{cd\leq y}{\!}}^{\star}\ \bigg(a_{h,i,j}(\log c)^{h}+b_{h,i,j}(\log cq)^{h}\bigg)\frac{\mu(c)\mu(cd)^2}{c^2d}M_i(cd)M_j(cd)
 \\
 &\qquad\quad + O_{k,\varepsilon} \big(\varphi^+(q) (\log q)^{2k-1+\varepsilon}\big).
 \end{split}
 \end{equation*}
Since $M_i(cd)=0$ for $i>1$, we need only to consider the terms with $0\leq i,j \leq 1$.  Moreover, the terms involving powers of $\log c$ can be ignored, as they contribute (due to the presence of $c^{-2}$ in the sum) an amount which is $\ll_{k,\varepsilon} (\log q)^{2k-1+\varepsilon}$.  Therefore, the above expression simplifies to 
\begin{equation}\label{dK3}
\begin{split}
 \frac{d^{2k}}{d\alpha^k d\beta^k} & K_{\alpha,\beta}(q)\Big|_{\alpha=\beta=0} = T_1 + 2 T_2 + T_3 + O_{k,\varepsilon} \big( \varphi^+(q)(\log q)^{2k-1+\varepsilon}\big),
 \end{split}
 \end{equation}
where 
$$ T_1= \varphi^+(q){\sum_{cd\leq y}{\!}}^{\star}\ b_{2k+1,0,0}(\log q)^{2k+1} \frac{\mu(c)\mu(cd)^2}{c^2d}M_0(cd)^2$$
$$ T_2=\varphi^+(q){\sum_{cd\leq y}{\!}}^{\star}\ b_{2k,1,0}(\log q)^{2k} \frac{\mu(c)\mu(cd)^2}{c^2d}M_0(cd)M_1(cd)$$
and 
$$ T_3 = \varphi^+(q){\sum_{cd\leq y}{\!}}^{\star}\ b_{2k-1,1,1}(\log q)^{2k-1} \frac{\mu(c)\mu(cd)^2}{c^2d}M_1(cd)^2.$$

We first evaluate $T_1$. Using Lemma 4.5 we have that
\begin{eqnarray*}
T_1&=&\varphi^+(q)\frac{b_{2k+1,0,0}q^2(\log q)^{2k+1}}{\varphi(q)^2(\log y)^2}{\sum_{cd\leq y}{\!}}^{\star}\ \frac{\mu(c)\mu(cd)^2d}{\varphi(cd)^2}P'\bigg(\frac{\log y/cd}{\log y}\bigg)^2\nonumber\\
&=&\varphi^+(q)\frac{b_{2k+1,0,0}q^2(\log q)^{2k+1}}{\varphi(q)^2(\log y)^2}{\sum_{n\leq y}{\!}}^{\star}\ \frac{\mu(n)^2}{\varphi(n)}P'\bigg(\frac{\log y/n}{\log y}\bigg)^2.
\end{eqnarray*}
Now Lemma 4.6 implies that  
\begin{displaymath}
{\sum_{n\leq y}{\!}}^{\star}\ \frac{\mu(n)^2}{\varphi(n)}P'\bigg(\frac{\log y/n}{\log y}\bigg)^2=\frac{\varphi(q)}{q}\big(\log y+O(1)\big)\int_{0}^{1}P'(x)^2dx.
\end{displaymath}
Hence
\begin{equation}\label{T1}
T_1=\varphi^+(q)\frac{b_{2k+1,0,0}q(\log q)^{2k+1}}{\varphi(q)\log y}\int_{0}^{1}P'(x)^2dx+O_{k,\varepsilon}(\varphi^+(q)(\log q)^{2k-1+\varepsilon}).
\end{equation}
Similarly, it can be shown that
\begin{eqnarray}\label{T2}
T_2&=&-\varphi^+(q)\frac{b_{2k,1,0}q(\log q)^{2k}}{\varphi(q)}\int_{0}^{1}P'(x)P(x)dx+O_{k,\varepsilon}(\varphi^+(q)(\log q)^{2k-1+\varepsilon})\nonumber\\
&=&-\frac{b_{2k,1,0}q(\log q)^{2k}}{2\varphi(q)}+O_{k,\varepsilon}(\varphi^+(q)(\log q)^{2k-1+\varepsilon}),
\end{eqnarray}
and that
\begin{equation}\label{T3}
T_3=\varphi^+(q)\frac{b_{2k-1,1,1}q(\log q)^{2k-1}\log y}{\varphi(q)}\int_{0}^{1}P(x)^2dx+O_{k,\varepsilon}(\varphi^+(q)(\log q)^{2k-1+\varepsilon}).
\end{equation}
Thus, combining (\ref{6666}), (\ref{dK3}), (\ref{T1}), (\ref{T2}), (\ref{T3}) and noting that
\begin{displaymath}
b_{2k+1,0,0}=\frac{\varphi(q)}{q(2k+1)},\quad b_{2k,0,1}=-\frac{\varphi(q)}{2q},\quad \textrm{and} \quad b_{2k-1,1,1}=\frac{\varphi(q)k^2}{q(2k-1)},
\end{displaymath}
it follows that, for $y=q^\vartheta$ and $0<\vartheta<\tfrac{1}{2},$ 
\begin{eqnarray*}
S_2(k,q) &=&\bigg(\frac{\vartheta^{-1}}{2k+1}\int_{0}^{1}P'(x)^2dx+\frac{1}{2}+\frac{\vartheta k^2}{2k-1}\int_{0}^{1}P(x)^2dx\bigg)\varphi^+(q)(\log q)^{2k}\nonumber\\
&&\qquad\qquad+O_{k,\varepsilon}(\varphi^+(q)(\log q)^{2k-1+\varepsilon}).
\end{eqnarray*}
This completes the proof of Proposition \ref{prop2}.

\section{Completing the Proof of Theorem \ref{th2}: Optimizing the mollifier}

We are now in a position to complete the proof of Theorem 1.  By Proposition \ref{prop1} and Propostion \ref{prop2}, for $0<\vartheta<\tfrac{1}{2}$, we see that
\begin{equation}\label{tt}
P_{k}^* \geq \left[ \frac{\vartheta^{-1}}{2k\!+\!1} \int_{0}^{1} P'(x)^{2} \ dx +\frac{1}{2} + \frac{\vartheta k^{2}}{2k\!-\!1}  \int_{0}^{1} P(x)^{2} \ dx  \right]^{-1}.
\end{equation}
For each choice of $k\in\mathbb{N}$, we wish to find a polynomial $P$ satisfying $P(0)=1$ and $P(1)=0$ that maximizes the expression on the right-hand side of the above inequality. Equivalently, we wish to minimize the expression 
\begin{equation}\label{ttt}
F_{k}(P) := \frac{\vartheta^{-1}}{2k\!+\!1} \int_{0}^{1} P'(x)^{2} \ dx  + \frac{\vartheta k^{2}}{2k\!-\!1}  \int_{0}^{1} P(x)^{2} \ dx.
\end{equation}
This optimization problem is solved explicitly in \textsection $7$ of \cite{MV} (and, independently, in \cite{CG}; see the remarks on page 97). We recall the argument given by Michel and Vanderkam in \cite{MV}.

Using a standard approximation argument, the polynomial $P$ can be replaced by any infinitely differentiable function with a rapidly convergent Taylor series on $[0,1]$. In this case, using the calculus of variations, the optimization problem can be explicitly solved and, for $k>0$, the optimal choice of $P$ is $$ P(t) = \frac{\sinh(\Lambda t)}{\sinh(\Lambda)}, \quad \text{ where } \  \Lambda = \vartheta k \sqrt{\frac{2k\!+\!1}{2k\!-\!1}}.$$ With this choice of $P$, it follows that
\begin{equation}\label{FK}
F_{k}(P) = \frac{\Lambda \coth \Lambda}{\vartheta(2k\!+\!1)} = \frac{k \coth\Lambda}{\sqrt{4k^{2}\!-\!1}}.
\end{equation}
As $k$ gets large, the function $\coth\Lambda \rightarrow 1$ and so asymptotically (as $k\rightarrow\infty$)
$$F_{k}(P) = \frac{1}{2}+\frac{1}{16 k^{2}} +O\Big(\frac{1}{k^{4}}\Big).$$
When combined with (\ref{tt}) and (\ref{ttt}), this asymptotic formula is enough to establish the estimate for $P_{k}^{*}$ in (\ref{pro}) and, thus, completes the proof of Theorem \ref{th2}.

\begin{table}[ht]
\caption{In the table below, lower bounds for the proportions $P_{k}$ and $P_{k}^{*}$, defined in equations (\ref{PK}) and (\ref{PstarK}), respectively. These calculations were performed by using the expression for $F_k(P)$ given in (\ref{FK}) with $\vartheta=\frac{1}{2} - 1\times 10^{\text{-}8}$.}\label{eqtable}
\renewcommand\arraystretch{1.5}
\noindent\[
\begin{array}{|c|c|c|}
\hline
\ k \ &\text{ Lower bound for }P_{k} \ &\text{ Lower bound for }P_{k}^{*}  \ \\
\hline
$1$ & { \tfrac{2}{3} \times}0.8216\dots &0.7544\dots \\
$2$ &  { \tfrac{2}{3} \times}0.9369\dots &0.9083\dots \\
$3$ & { \tfrac{2}{3} \times}0.9758\dots &0.9642\dots \\
$4$ & { \tfrac{2}{3} \times}0.9901\dots &0.9853\dots \\
$5$ & { \tfrac{2}{3} \times}0.9956\dots &0.9935\dots \\
$10$ & { \tfrac{2}{3} \times}0.9995\dots &0.9993\dots \\
$15$ & { \tfrac{2}{3} \times}0.9997\dots &0.9997\dots \\
$20$ & { \tfrac{2}{3} \times}0.9998\dots &0.9998\dots \\
$25$ & { \tfrac{2}{3} \times}0.9999\dots &0.9999\dots \\
\hline
\end{array}
\]
\end{table}

\section*{Acknowledgments}

Much of this research was completed while the first author was visiting the University of Rochester and the University of Mississippi. He would like to thank these institutions for their hospitality. Both authors would like to thank Professor K. Soundararajan for pointing out an error in the original version of this manuscript and to thank Professor Steven M. Gonek for his support and encouragement.

\end{document}